\documentclass[a4paper,12pt]{amsart}
\usepackage{amsmath,amsfonts,amssymb,amsthm,latexsym,amscd}
\usepackage[dvips]{graphicx}
\usepackage[T1]{fontenc}

\usepackage{parskip}

\newcommand{\B}[1]{{\mathbf #1}}
\newcommand{\C}[1]{{\mathcal #1}}

\newtheorem{thm}{Theorem}[section]
\newtheorem{thm*}{Theorem}
\newtheorem{lem}[thm]{Lemma}
\newtheorem{prop}[thm]{Proposition}
\newtheorem{cor}[thm]{Corollary}
\newtheorem{defn}[thm]{Definition}

\newtheorem*{q*}{Question}

\theoremstyle{definition}
\newtheorem{ex}[thm]{Example}

\newtheorem{rem}[thm]{Remark}
\newtheorem*{rem*}{Remark}
\newtheorem*{rems*}{Remarks}
\newtheorem*{cor*}{Corollary}

\def\B{\mathbf}


\newcommand{\Mo}{(M,\omega )}

\newcommand\Diff{\operatorname{Diff}}

\newcommand\Ham{\operatorname{Ham}}

\newcommand\Aut{\operatorname{Aut}}

\newcommand\OP{\operatorname}


\def\C{\mathcal{C}}
\def\D{\mathbf{D}}
\def\Diff{\operatorname{Diff}}
\def\G{\mathcal{G}}
\def\fG{\mathfrak{G}}
\def\fL{\mathfrak{L}}
\def\cH{\mathcal{H}}
\def\Id{\operatorname{Id}}
\def\Im{\operatorname{Im}}

\def\X{\operatorname{X}}

\def\a{\alpha}
\def\b{\beta}
\def\area{\operatorname{area}}

\def\inc{\operatorname{inc}}

\def\g{\gamma}

\def\s{\sigma}

\begin{document}

\title[The autonomous metric]{On the autonomous metric on the group of
area-preserving diffeomorphisms of the 2-disc}
\author{Michael Brandenbursky}
\author{Jarek K\k{e}dra}


\begin{abstract}
Let $\D^2$ be the open unit disc in the Euclidean plane and let
$\B G:=\Diff(\D^2,\area)$ be the group of smooth compactly supported
area-preserving diffeomorphisms of $\D^2$.
For every  natural number $k$ we construct an injective homomorphism
$\B Z^k\to\B G$, which is bi-Lipschitz with respect to the word metric on
$\B Z^k$ and the autonomous metric on $\B G$.
We also show that the space of homogeneous quasi-morphisms vanishing on all
autonomous diffeomorphisms in the above group is infinite dimensional.
\end{abstract}

\maketitle

\section{Introduction}

\label{S:intro}

\subsection{The main result}

Let $\B D^2\subset \B R^2$ be the open unit disc and let
$H\colon \B D^2\to \B R$ be a smooth compactly supported function.
It defines a vector field
$$
X_H(x,y)=
-\frac{\partial H}{\partial y}\partial_x
+\frac{\partial H}{\partial x}\partial_y
$$
which is tangent to the level sets of $H$. Let $h$ be the
time-one map of the flow $h_t$ generated by $X_H$. The diffeomorphism $h$ is
area-preserving and every diffeomorphism arising in this way is called
{\em autonomous}. Such a diffeomorphism is relatively easy to understand
in terms of its generating function.

It is a well known fact that every smooth compactly supported and area-preserving
diffeomorphism  of the disc $\B D^2$ is a composition of finitely
many autonomous diffeomorphisms \cite{Ba}.
How many? In the present paper we are interested in the geometry of this question.
More precisely, we define the {\em autonomous norm}
on the group $\B G:=\Diff(\B D^2,\area)$ of smooth compactly supported
area-preserving diffeomorphisms of the disc by
$$
\|f\|_{\OP{Aut}}:=
\min\left\{m\in \B N\,|\,f=h_1\cdots h_m
\text{ where each } h_i \text{ is autonomous}\right\}.
$$
The associated metric is defined by ${\bf
d}_{\OP{Aut}}(f,g):=\|fg^{-1}\|_{\OP{Aut}}$.  Since the set of autonomous
diffeomorphisms is invariant under conjugation the autonomous metric is
bi-invariant.

\begin{thm*}\label{T:main}
For every natural number $k\in \B N$ there exists an
injective homomorphism $\B Z^k\to \Diff(\B D^2,\area)$ which is bi-Lipschitz
with respect to the word metric on $\B Z^k$ and the autonomous metric on
$\Diff(\B D^2,\area)$.
\end{thm*}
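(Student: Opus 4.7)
The plan is to use homogeneous quasi-morphisms as lower-bound detectors for the autonomous norm. The basic mechanism is standard: if $\phi : \B G \to \R$ is a homogeneous quasi-morphism with defect $D(\phi)$ that vanishes on every autonomous diffeomorphism, then for any factorisation $f = h_1 \cdots h_m$ with each $h_i$ autonomous one has $|\phi(f)| \leq (m-1)D(\phi)$, hence $|\phi(f)| \leq D(\phi)\,\|f\|_{\OP{Aut}}$. So the existence of a rich family of such quasi-morphisms --- precisely the second assertion announced in the abstract --- translates directly into lower bounds on the autonomous norm.

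First I would pick $k$ pairwise disjoint open sub-discs $\Delta_1, \ldots, \Delta_k \subset \B D^2$ and construct area-preserving diffeomorphisms $f_i$ supported in $\Delta_i$. Commutativity is automatic from disjointness of supports, and the upper Lipschitz bound $\|f_1^{n_1} \cdots f_k^{n_k}\|_{\OP{Aut}} \leq \sum_i |n_i|\,\|f_i\|_{\OP{Aut}}$ is immediate. The $f_i$ should be chosen so as to be detected by the quasi-morphisms described below; a typical recipe is to take the time-one map of a time-dependent Hamiltonian whose flow visibly braids three or more point orbits inside $\Delta_i$, ensuring non-trivial topological complexity.

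The crucial step is to produce homogeneous quasi-morphisms $\phi_1, \ldots, \phi_k$ on $\B G$, each vanishing on autonomous diffeomorphisms, such that the matrix $(\phi_i(f_j))_{i,j}$ is non-singular. A natural construction is the Gambaudo--Ghys recipe: given a homogeneous quasi-morphism $\psi$ on the surface pure braid group $P_n(\B D^2)$, one obtains a quasi-morphism on $\B G$ by integrating $\psi$ applied to the braid traced out by an $n$-tuple of points under an isotopy from $\Id$ to the given diffeomorphism. Taking $\psi$ to come from a hyperbolic quotient of $P_n$ (or a counting quasi-morphism on a free quotient), the vanishing on autonomous elements reduces to the observation that an autonomous flow moves point $n$-tuples along the level sets of one Hamiltonian, producing essentially reducible (non-pseudo-Anosov) braids on which the chosen $\psi$ is uniformly bounded. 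By localising the averaging to configurations lying inside $\Delta_i$ and varying $n$ or $\psi$, one arranges that $\phi_i$ detects $f_i$ but vanishes on the $f_j$ with $j \neq i$, producing a diagonal matrix with non-zero diagonal.

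With these ingredients in place, one defines $\Phi : \Z^k \to \B G$ by $\Phi(n_1, \ldots, n_k) = f_1^{n_1} \cdots f_k^{n_k}$. A homogeneous quasi-morphism is additive on any pairwise commuting family (a short limit argument from $\phi((ab)^n) = \phi(a^n b^n) + O(1)$), so $\phi_i(\Phi(n_1,\ldots,n_k)) = n_i \phi_i(f_i)$, whence injectivity of $\Phi$ follows at once from the non-vanishing of the diagonal; the quasi-morphism inequality then gives $|n_i|\,|\phi_i(f_i)| \leq D(\phi_i)\,\|\Phi(n_1,\ldots,n_k)\|_{\OP{Aut}}$, and summing over $i$ yields the lower Lipschitz bound. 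The main obstacle --- and the technical heart of the argument --- is the construction of the quasi-morphisms themselves: one must verify that the Gambaudo--Ghys integral built from a hyperbolic input $\psi$ genuinely vanishes on autonomous diffeomorphisms, which requires a careful analysis of the braids traced by point configurations under integrable Hamiltonian flows and is presumably also the content of the infinite-dimensionality statement highlighted in the abstract.
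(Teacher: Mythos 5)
Your overall strategy coincides with the paper's at the top level (Gambaudo--Ghys quasi-morphisms vanishing on autonomous diffeomorphisms, disjointly supported generators, the standard quasi-morphism lower bound, and the trivial upper bound), but the two steps you treat as plausible heuristics are precisely where the content lies, and as formulated they do not go through. First, the vanishing on autonomous diffeomorphisms: your claim that an autonomous flow traces ``essentially reducible'' braids on which a hyperbolic-quotient $\psi$ is \emph{uniformly bounded} is false. For a configuration whose points lie on distinct level curves of the Hamiltonian, the braid $\g(h^p;x)$ is, up to bounded factors, conjugate to an element of the abelian subgroup $\B A_n=\langle \eta_{i,n}\rangle$ with exponents growing \emph{linearly} in $p$, so $\varphi(\g(h^p;x))/p$ has a nonzero limit unless $\varphi$ vanishes on $\B A_n$. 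The correct hypothesis is $\varphi\in Q(\B B_n,\B A_n)$, and even with it the vanishing is first proved only for Morse-type Hamiltonians (via the Reeb-graph formula of \cite{B}), and is then extended to arbitrary autonomous diffeomorphisms by $C^1$-density of Morse-type functions together with a continuity theorem for $\overline{\Phi}_n$, whose proof requires the $L^2$-metric estimates of Gambaudo--Lagrange \cite{GL}. Deferring all of this to ``the infinite-dimensionality statement'' leaves the heart of the proof missing, and the heuristic you offer in its place points at the wrong mechanism.

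Second, your device for making the matrix $(\phi_i(f_j))$ non-singular --- ``localising the averaging to configurations lying inside $\Delta_i$'' --- is not sound. Restricting the configuration integral to $\X_n(\Delta_i)$ destroys the quasi-morphism property on $\B G$ (a general $g\in\B G$ does not preserve $\Delta_i$), while a quasi-morphism defined only on the subgroup supported in $\Delta_i$ yields no lower bound for the autonomous norm in $\B G$, since a factorisation into autonomous maps need not preserve $\Delta_i$. Moreover, even for the global quasi-morphism, its value on a diffeomorphism supported in a small disc is not automatically computed by configurations inside that disc: configurations with some points outside contribute braids conjugate to powers of $\eta_{2,3}$ up to bounded factors, and one needs $\varphi_3(\eta_{2,3})=0$ and $n=3$ to kill these terms --- this is the paper's extension lemma (Lemma \ref{lem:Phi-3-extension}). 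The paper avoids localisation altogether: by a dual-basis linear-algebra lemma (Lemma \ref{L:abstract}) it finds $\varphi_1,\dots,\varphi_k\in Q(\B B_3,\B A_3)$ and elements $g_1,\dots,g_k$ all supported in one disc of radius $1/k$ with $\overline{\Phi}_{i,3}(g_j)=\delta_{ij}$, then conjugates the $g_j$ into $k$ disjoint discs (conjugation invariance preserves the values) and corrects by disjointly supported autonomous factors, using additivity on commuting elements. Your proposal needs a substitute for these steps; without one, neither the cross terms $\phi_i(f_j)$, $i\neq j$, nor even the diagonal entries are under control.
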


\subsection{Remarks}
\hfill
\begin{enumerate}
\item
We show in the proof of Theorem \ref{T:main} that the embedding of
$\B Z^k$ is constructed to be in the kernel of the Calabi
homomorphism $\C \colon \B G\to \B R$ (see Remark \ref{rem:Calabi} for
a definition and \cite{Ba,C,MS} for more information).

\item
Gambaudo and Ghys defined in \cite[Section 6.3]{GG} the autonomous
metric on the group of area-preserving diffeomorphisms of the 2-sphere and
showed that its diameter is infinite.

\item
For each $p\in[1,\infty)$ the group $\B G$ may be equipped with the right-invariant $L^p$-metric, see Arnol'd-Khesin for a detailed exposition \cite{AK}. Results similar to ours with respect to the $L^p$-metric were obtained by Benaim-Gambaudo in \cite{BG} and by the first author in \cite{B-infinity}.

\item
In a greater generality, the autonomous metric is defined on the group
$\Ham(M,\omega)$ of compactly supported Hamiltonian diffeomorphisms of a
symplectic manifold. It is interesting to know if such a metric is always
unbounded.

\item
Even more generally the autonomous metric can be defined as follows. A
compactly supported diffeomorphism $h$ of a manifold $M$ is called autonomous
if it is the time-one map of a time-independent compactly supported flow $h_t$.
The group $\Diff(\B D^2)$ of all smooth compactly
supported diffeomorphisms of the disc is generated by autonomous diffeomorphisms
and hence the autonomous metric can be defined as above.
However, it is known due to Burago, Ivanov and Polterovich
\cite{BIP}, that the autonomous metric is bounded in this case.

\item
Since the autonomous metric is bi-invariant, investigating
geometric properties of embeddings of non-abelian groups
has to be done with respect to some bi-invariant metrics.
At the time of writing this paper such metrics are not
well understood. In Section \ref{sec-relation} we prove an algebraic
result which may indicate some good geometric properties.
More precisely, for a non-abelian free group of rank two
we construct an injective homomorphism
$\B F_2\to \B G$ and prove that the image of the
induced homomorphism $Q(\B G) \to Q(\B F_2)$ on the
spaces of homogeneous quasi-morphisms is
infinite dimensional.

\end{enumerate}

\subsection{Comments on the proof of Theorem \ref{T:main}}
\label{subsec-comments}
Let us start with a definition.
A function $\psi\colon \Gamma \to~\B R$ from a group $\Gamma$ to the reals is called a {\em quasi-morphism}
if there exists a real number $A\geq 0$ such that
$$
|\psi(gh) - \psi(g) - \psi(h)|\leq A
$$
for all $g,h\in \Gamma$. The infimum of such numbers $A$ is called the
\emph{defect} of $\psi$ and is denoted by $D_\psi$.
If $\psi(g^n)=n\psi(g)$ for all $n\in \B Z$
and all $g\in \Gamma$ then $\psi$ is called \emph{homogeneous}. Any
quasi-morphism $\psi$ can be homogenized by setting

\begin{equation*}\label{eq:definition-homogenization}
\overline{\psi} (g) := \lim_{p\to +\infty} \frac{\psi (g^p)}{p}.
\end{equation*}
The vector space of homogeneous quasi-morphisms on $\Gamma$
is denoted by $Q(\Gamma)$. For more details about quasi-morphisms see e.g. \cite{Cal}.

The first part of the proof is to show that the space $Q(\B G,\OP{Aut})$ of
homogeneous quasi-morphisms on $\Diff(\B D^2,\area)$ which are trivial on the
set of autonomous diffeomorphisms is infinite dimensional.  This is done by
constructing (for $n\geq 3$) an injective linear map
$$\G_n \colon Q(\B B_n,\B A_n)\to Q(\B G,\OP{Aut}),$$
where $\B B_n$ denotes the braid group on
$n$-strings and $\B A_n\subset~\B B_n$ is a certain abelian subgroup defined in
Section~\ref{sec-GG-construction}.

\begin{rem}\label{R:unbounded} Notice that the existence of a nontrivial
homogeneous quasi-morphism $\psi\colon \B G\to \B R$ that is trivial on
$\OP{Aut}\subset \B G$ implies that the autonomous norm is unbounded. Indeed,
for every $f\in G$
we have that $|\psi(f)|=|\psi(h_1\ldots h_m)|\leq mD_\psi$ and hence for every
natural number $n$ we get $\|f^n\|_{\OP{Aut}}\geq \frac{|\psi(f)|}{D_\psi}n>0$,
provided $\psi(f)\neq 0$.  \end{rem}

The map $\G_n$ is defined in Section \ref{sec-GG-construction} and is induced
from the construction due to Gambaudo and Ghys \cite{GG}. The fact that the
quasi-morphism $\G_n(\varphi)$ is trivial on the set of autonomous
diffeomorphisms provided $\varphi$ is trivial on $\B A_n$ is proved in Section
\ref{sec-proofs}. The latter proof consists of two steps. First, we show that
$\G_n(\varphi)$ is trivial on autonomous diffeomorphisms generated by certain
Morse-type functions (Theorem \ref{thm:Morse-type}). Secondly, the set of Morse-type functions is dense in the set of all functions with respect to the $C^1$-topology, and $$\G_n(\varphi)\colon \B G\to \B R$$
is a continuous function, see Theorem \ref{T:Morse-Ham}.

It is known that the space $Q(\B B_n,\B A_n)$ is infinite dimensional (see
Section \ref{subsec-proof-T-Q-Aut}) and hence we obtain that
$Q(\B G,\OP{Aut})$ is infinite dimensional.
Let $\varphi_i\colon \B B_3\to \B R$ be
homogeneous quasi-morphisms comprising a set of $k$ linearly independent
elements of $Q(\B B_3)$. It follows that the map
$\Phi\colon\B G \to \B R^k$ defined by
$$
\Phi(f) = (\G_3(\varphi_1)(f),\ldots,\G_3(\varphi_k)(f)),
$$
is Lipschitz and its image is quasi-isometric to the whole $\B R^k$.

The second part of the proof is a construction of a homomorphism
$\B Z^k\to \B G$ with the required properties.
It is in fact a section of the map
$\Phi\colon\B G\to \B R^k$ mentioned above.  It is defined by constructing $k$
diffeomorphisms $f_j\in \B G$ with disjoint supports (hence commuting)
such that $\G_3(\varphi_i)(f_j)=\delta_{ij}$, where $\delta_{ij}$ is the Kronecker delta.

\begin{rem}
The above map $\Phi$ is a quasi-morphism, if one defines an $\B R^k$ valued quasi-morphism
analogously to the real valued one using  an $L^p$-norm.
Observe that there exists a nontrivial homogeneous
quasi-morphism $\B F_2\to \B R^k$ on the free group of rank two with the image
quasi-isometric to $\B R^k$ for every $k\in \B N$. This follows from the fact that $Q(\B F_2)$ is
infinite dimensional. However, none of such quasi-morphisms admits
a homomorphic section over $\B Z^k$ for $k\geq 2$.
\end{rem}

\section{The Gambaudo-Ghys construction}
\label{sec-GG-construction}
Let us recall a construction, due to Gambaudo and Ghys \cite[Section 5.2]{GG},
which produces a quasi-morphism on $\B G$ from a quasi-morphism on the
pure braid group~$\B P_n$.

Let $g_t\in \B G$ be an isotopy from the identity to $g\in \B G$
and let $z\in \B D^2$ be a basepoint. For $y\in \B D^2$ we define a loop
$\gamma_{y,z}\colon [0,1]\to \B D^2$ by
$$
\gamma_{y,z}(t):=
\begin{cases}
(1-3t)z+3ty &\text{ for } t\in \left [0,\frac13\right ]\\
g_{3t-1}(y) &\text{ for } t\in \left [\frac13,\frac23\right ]\\
(3-3t)g(y)+(3t-2)z & \text{ for } t\in \left [\frac23,1\right ].
\end{cases}
$$

Let $\X_n(\B D^2)$ be the configuration space of all ordered $n$-tuples
of pairwise distinct points in the disc $\B D^2$. It's fundamental group
$\pi_1(\X_n(\B D^2))$ is identified with the pure braid group $\B P_n$.
Let $z=(z_1,\ldots,z_n)$ in $\X_n(\B D)$ be a base point.
For almost every $x=(x_1,\ldots,x_n)\in \X_n(\B D^2)$ the
$n$-tuple of loops $(\gamma_{x_1,z_1},\ldots,\gamma_{x_n,z_n})$ is
a based loop in the configuration space $\X_n(\B D^2)$.
Since the group $\B G$ is contractible (see e.g. \cite[Corollary 2.6]{T})
the based homotopy
class of this loop does not depend on the choice of the
isotopy $g_t$. Let $\gamma(g,x)\in \B P_n=\pi_1(\X_n(\B D^2),z)$
be an element represented by this loop.

Let $\varphi\colon \B P_n\to \B R$ be a homogeneous quasi-morphism.
Define the quasi-morphism
$\Phi_n\colon \B G\to \B R$ and its homogenization
$\overline{\Phi}_n\colon \B G\to \B R$
by
\begin{equation}\label{eq:GG}
\Phi_n(g):=
\int\limits_{\X_n(\D^2)}\varphi(\g(g;{x}))d{x}\qquad\qquad
\overline{\Phi}_n(g):=\lim_{p\to +\infty}\frac{\Phi_n(g^p)}{p}
\thinspace .
\end{equation}

\begin{rem}
The assertion that both the above functions are well defined quasi-morphisms is
proved in \cite[Lemma 4.1]{B}.  Using the family of signature quasi-morphisms
on $\B P_n$ (one for each $n$) Gambaudo-Ghys showed that
$\dim(Q(\B G))=\infty$. This fact was also proved in \cite{BEP}.
\end{rem}

\begin{rem}\label{rem:Calabi}
The Calabi homomorphism $\C\colon\B G\to\B R$ may be defined as follows:
\begin{equation*}\label{eq:Calabi}
\C(g)=\int_0^1\int_{\D^2}H(x,t)dxdt,
\end{equation*}
where $H(x,t)$ defines a flow whose time-one map is $g$, see e.g.
\cite[Lemma 10.27]{MS}.
The group $\B P_2$ is infinite cyclic, hence every homogeneous quasi-morphism
$\varphi_2\colon\B P_2\to\B R$ is a homomorphism. Since the kernel of the
Calabi homomorphism $\C$ is a simple group \cite{Ba}, we have that
$\overline{\Phi}_2(g)=C\cdot\C(g)$ for every $g\in\B G$, where $C$ is a real
constant independent of $g$. The proof of this equality which does not rely on the theorem of Banyaga can be found in \cite{GG1}.
\end{rem}

The above construction defines a linear map $Q(\B P_n)\to Q(\B G)$.  Let
$$
\G_n\colon Q(\B B_n)\to Q(\B G)
$$
be its composition with the homomorphism
$Q(\iota)\colon Q(\B B_n)\to Q(\B P_n)$ induced
by the inclusion $\iota\colon \B P_n\to \B B_n$.
Let $\B A_n\subset \B B_n$ be an abelian group generated by
braids $\eta_{i,n}$ shown in Figure \ref{fig:braids-eta-i-n}.
Recall that
$Q(\B B_n,\B A_n)$ denotes the space of homogeneous quasi-morphisms
on $\B B_n$ that are trivial on $\B A_n$ and that $Q(\B G,\OP{Aut})$
denotes the space of homogeneous quasi-morphisms on $\B G$
that are trivial on autonomous diffeomorphisms.

\begin{thm}\label{T:Q-Aut}
Let $n\geq 3$. The image of the linear map
$$
\G_n\colon Q(\B B_n,\B A_n) \to Q(\B G,\OP{Aut})
$$
is infinite dimensional. In particular, the diameter
of $(\B G,{\bf d}_{\OP{Aut}})$ is infinite.
\end{thm}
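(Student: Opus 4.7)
The plan is to derive the theorem from two ingredients. First, that the space $Q(\B B_n,\B A_n)$ is infinite dimensional for $n\geq 3$; second, that the restriction of $\G_n$ to this subspace lands in $Q(\B G,\OP{Aut})$ and has infinite-dimensional image. Once a single nontrivial element of $Q(\B G,\OP{Aut})$ is produced, the assertion on the diameter is immediate from Remark~\ref{R:unbounded}.

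For the first ingredient, it is standard that the braid group $\B B_n$ with $n\geq 3$ carries infinitely many linearly independent homogeneous quasi-morphisms (via Bestvina--Fujiwara-type constructions on curve complexes, or via the signature quasi-morphisms already used by Gambaudo--Ghys). Since $\B A_n$ is a finitely generated abelian group, the restriction map $Q(\B B_n)\to \Hom(\B A_n,\B R)$ has finite-dimensional target, so its kernel $Q(\B B_n,\B A_n)$ remains infinite dimensional.

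The substantive part, and the main obstacle, is showing that $\G_n(\varphi)$ vanishes on every autonomous diffeomorphism whenever $\varphi\in Q(\B B_n,\B A_n)$. Following Section~\ref{subsec-comments}, I would split this into two stages. First restrict to autonomous diffeomorphisms $h$ generated by Morse-type Hamiltonians $H$: the flow of $X_H$ preserves level sets of $H$, so its orbits are closed curves lying in regular level components, and for a generic $n$-tuple $x=(x_1,\ldots,x_n)$ the braid $\gamma(h,x)\in \B P_n$ is a product of parallel rotations of the $x_i$ along disjoint Jordan curves. The key geometric claim --- the heart of Theorem~\ref{thm:Morse-type} --- should be that such a braid is conjugate in $\B B_n$ to an element of $\B A_n$. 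Since homogeneous quasi-morphisms are conjugation-invariant and $\varphi$ vanishes on $\B A_n$, we obtain $\varphi(\gamma(h,x))=0$ for almost every $x$, whence both $\Phi_n(h)$ and its homogenization $\G_n(\varphi)(h)$ vanish.

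The second stage extends this to all of $\OP{Aut}$ by density: Morse-type Hamiltonians are $C^1$-dense in the space of smooth compactly supported functions on $\D^2$, so the corresponding time-one maps are dense in $\OP{Aut}$ with respect to the $C^1$-topology on $\B G$, and $\G_n(\varphi)$ is continuous in this topology by Theorem~\ref{T:Morse-Ham}. Finally, infinite dimensionality of the image follows because the restriction of $\G_n$ to $Q(\B B_n,\B A_n)$ is in fact injective; this can be established by evaluating on carefully chosen families of diffeomorphisms whose associated braids probe enough of $\B B_n$, in the same spirit as the commuting diffeomorphisms $f_j$ satisfying $\G_3(\varphi_i)(f_j)=\delta_{ij}$ used in the second part of the proof of Theorem~\ref{T:main}.
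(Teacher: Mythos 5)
Most of your outline coincides with the paper's own route: the infinite dimensionality of $Q(\B B_n,\B A_n)$ via restriction to the finitely generated abelian group $\B A_n$, and the fact that $\G_n(\varphi)$ vanishes on $\OP{Aut}$ via the two-step argument (Theorem \ref{thm:Morse-type} for Morse-type Hamiltonians, then $C^1$-density of Morse-type functions together with the continuity statement of Theorem \ref{T:Morse-Ham}), with the unboundedness of the diameter then following from Remark \ref{R:unbounded}. One caveat on your sketch of the Morse-type step: the time-one map of an autonomous flow moves points only part of the way along their level curves, so $\gamma(h,x)$ is not literally a product of full rotations conjugate into $\B A_n$; the actual proof passes to the homogenization and uses the Reeb-graph integral formula of \cite[Theorem 4.5]{B}, whose measure is killed by the condition $\varphi(\eta_{i,n})=0$. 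Since you invoke Theorem \ref{thm:Morse-type} as the key input anyway, this is a secondary point.

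The genuine gap is the final step, infinite dimensionality of the image. You assert that $\G_n$ is injective on $Q(\B B_n,\B A_n)$ and that this ``can be established by evaluating on carefully chosen families of diffeomorphisms \ldots in the same spirit as the commuting diffeomorphisms $f_j$ with $\G_3(\varphi_i)(f_j)=\delta_{ij}$ from the proof of Theorem \ref{T:main}.'' As phrased this is circular: in the paper those $f_j$ are produced by applying Lemma \ref{L:abstract} to a finite-dimensional subspace of $\Im\left(\G_{3,r}|_{\thinspace Q(\B B_3,\B A_3)}\right)$, and the fact that this image is large is exactly what Theorem \ref{T:Q-Aut} asserts. The paper closes this step by quoting Ishida's injectivity theorem (Theorem \ref{T:GG-injectivity}, \cite[Theorem 1.2]{I}), which is a substantial external result, not a routine verification. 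A non-circular evaluation argument of the kind you gesture at does exist --- it is essentially the matrix nonsingularity estimate in the proof of Theorem \ref{T:inf-dim-limit}, where braids $\b_j$ with $\varphi_i(\b_j)=\delta_{ij}$ are realized by twist diffeomorphisms supported on discs filling most of the area, and the contribution of configurations meeting the complement is bounded using word-length and defect estimates --- but that requires genuine quantitative work (and is carried out only for $n=3$), so without either citing Ishida's theorem or actually performing such estimates your proof of injectivity, and hence of the infinite dimensionality of the image, is missing.
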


\begin{figure}[htb]
\centerline{\includegraphics[height=1.4in]{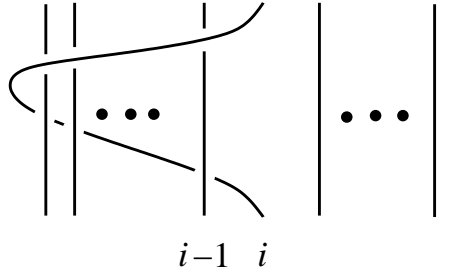}}
\caption{\label{fig:braids-eta-i-n} Braid $\eta_{i,n}$}
\end{figure}

\begin{rem}
Theorem \ref{T:Q-Aut} answers the following question posed to the first
author by L.Polterovich.

{\em
Does there exist a
quasi-morphism on $\B G$, given by the Gambaudo-Ghys construction, which
vanishes on all autonomous diffeomorphisms in $\B G$? In other words, does
there exist a nontrivial element in $\Im(\G_n)$ which vanishes on all
autonomous diffeomorphisms?
}
\end{rem}

\section{Proofs}
\label{sec-proofs}

\subsection{Evaluation of the map $\G_n$ on autonomous diffeomorphisms}
\label{subsec-GG-aut}
Denote the space of autonomous compactly supported Hamiltonians $H\colon\D^2\to\B R$ by $\cH$.

\begin{defn}\label{defn:Morse-type-functions}\rm
We say that a  function $H\in\cH$ is of  \textit{Morse-type}  if:
\begin{enumerate}
\item
The boundary of the support of $H$ is a simple closed curve.
\item
The function $H$ has no degenerate critical points in the interior of its support.
\item
If $x,y$ are two distinct non-degenerate critical points of $H$ then $H(x)\neq H(y)$.
\end{enumerate}
\end{defn}

\begin{thm}\label{thm:Morse-type}
If $\varphi_n\in Q(B_n,A_n)$ then $\overline{\Phi}_n(h)=0$
for every autonomous diffeomorphism $h$ generated by a Morse-type function $H$.
\end{thm}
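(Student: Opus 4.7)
The plan is to show that for almost every $x \in \X_n(\D^2)$, the braids $\gamma(h^p;x) \in \B P_n$ remain within a bounded word-length neighborhood of the abelian subgroup $\B A_n \subset \B B_n$, uniformly in the power $p$. Combined with the hypothesis $\varphi_n|_{\B A_n} = 0$, the conjugation-invariance of homogeneous quasi-morphisms, and the defect bound for $\varphi_n$, this will yield $|\varphi_n(\gamma(h^p;x))| \le C(x)$ independently of $p$. Integrating over $\X_n(\D^2)$ then gives $\Phi_n(h^p) = O(1)$, and hence $\overline{\Phi}_n(h) = \lim_{p\to\infty} \Phi_n(h^p)/p = 0$.

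First I would exploit the level-set structure of a Morse-type $H$. Since the boundary of $\supp(H)$ is a simple closed curve and the finitely many critical points are nondegenerate with pairwise distinct critical values, the complement in $\supp(H)$ of the critical set and the saddle separatrices decomposes into finitely many open regions. Each such region is foliated by invariant level circles of $H$ linearly ordered by nesting, and the flow of $X_H$ acts on each region by rotation along these circles. For almost every $x$, each $x_i$ lies on a regular level circle $C_i$ inside one of these regions, and the values $H(x_i)$ are pairwise distinct.

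Next I would analyze $\gamma(h^p;x)$ geometrically. Each strand $i$ traverses $C_i$ at a fixed rate, so after $p$ iterations strand $i$ has wound roughly $p/T(x_i)$ times around $C_i$. The key structural claim is that, up to a bounded-length correction coming from the linear segments in the definition of $\gamma_{y,z}$ and from the way the regions sit inside $\D^2$, the braid $\gamma(h^p;x)$ splits as a product of \emph{within-region} contributions, one for each region containing some $x_i$. Contributions from different regions commute in $\B B_n$ because their supports are disjoint; within a single region the strands lie on a linearly ordered nested family of invariant circles and the resulting braid is a product of full twists on nested subsets of strands -- that is, a product of (suitable conjugates of) the generators $\eta_{k,n}$ of $\B A_n$.

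The main obstacle is converting this geometric picture into a genuine algebraic decomposition $\gamma(h^p;x) = \alpha(x)\,a_p(x)$ with $a_p(x)$ in the subgroup generated by commuting conjugates of the $\eta_{k,n}$ and $\alpha(x) \in \B B_n$ of word length bounded independently of $p$. The cleanest route is an explicit region-by-region normal-form isotopy identifying each within-region contribution with a commuting conjugate-$\eta_{k,n}$-power, while absorbing the short initial and final pieces of $\gamma_{y,z}$, as well as the inter-region scaffolding, into the $p$-independent factor $\alpha(x)$. Once this decomposition is in hand, conjugation-invariance of $\varphi_n$ together with $\varphi_n|_{\B A_n}=0$ forces $\varphi_n(a_p(x)) = 0$; the defect inequality then gives $|\varphi_n(\gamma(h^p;x))| \le |\varphi_n(\alpha(x))| + D_{\varphi_n}$, uniformly in $p$. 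Integrating and dividing by $p$ yields $\overline{\Phi}_n(h) = 0$, as required.
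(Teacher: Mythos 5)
Your geometric mechanism is exactly the one the paper credits for the result it invokes: the published proof of this theorem is a one-line appeal to \cite[Theorem 4.5]{B}, which expresses $\overline{\Phi}_n(h)$ as an integral $\int_T\hbar^\prime\,d\mu$ over the Reeb graph of $H$, where the measure $d\mu$ is assembled from the values $\varphi_n(\eta_{i,n})$; for $\varphi_n\in Q(\B B_n,\B A_n)$ these values vanish, so $d\mu=0$ and the theorem follows. Your plan re-derives the qualitative vanishing directly from the picture that points on distinct level curves trace braids conjugate into $\B A_n$ (which is exactly the idea the paper's remark records), and that route can be made to work, but as written it has two genuine gaps beyond the decomposition you yourself flag as "the main obstacle" and only describe.

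First, $\varphi_n(a_p(x))=0$ does not follow from conjugation-invariance together with $\varphi_n|_{\B A_n}=0$ alone: your $a_p(x)$ is a product of commuting factors, each conjugate into $\B A_n$ but by \emph{different} conjugators (one per invariant circle or region in the tree of level curves), so $a_p(x)$ itself need not be conjugate into $\B A_n$. You need, and should state, the additivity of homogeneous quasi-morphisms on commuting elements. Second, the step ``integrating gives $\Phi_n(h^p)=O(1)$'' is not justified by the bound you actually establish, which is only $|\varphi_n(\gamma(h^p;x))|\le C(x)$ with $C(x)$ uniform in $p$ for each fixed $x$: to integrate over $\X_n(\D^2)$, or to dominate $\varphi_n(\gamma(h^p;x))/p$ in a convergence argument, you need $C$ integrable, i.e.\ a word-length bound on the correction that is uniform in $x$ as well; moreover the correction genuinely depends on $p$ through the fractional parts of the winding numbers, so it should be written $\alpha_p(x)$ with length bounded uniformly in both $p$ and $x$. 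Such a bound (crossings coming from the straight closing segments of $\gamma_{y,z}$ and from the incomplete last turns, at most a bounded number per pair of strands) is plausible, but verifying it, in particular near the saddle separatrices and near $\partial(\supp H)$ where the periods blow up, together with the region-by-region normal form you defer, is precisely the quantitative content carried by \cite[Theorem 4.5]{B} and is the part of your argument that is still missing.
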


\begin{proof}
The statement follows from \cite[Theorem 4.5]{B}. More precisely, it is shown there that
\begin{equation*}\label{eq:general-formula}
\overline{\Phi}_n(h):=
\G_n(\varphi_n)(h)=\int\limits_{T}\hbar^\prime d\mu,
\end{equation*}
where $T$ is the Reeb graph of $H$, and $\hbar^\prime\colon T\to \B R$
is a function induced by $H$, and $d\mu$ is a measure on $T$.
All the objects above are defined in \cite[Section 4.2]{B}.
In particular, it follows from the definition of the measure $d\mu$ that it is
trivial if $\varphi_n(\eta_{i,n})=0$ for each $i$. Hence $\overline{\Phi}_n(h)=0$, and the proof follows.
\end{proof}

\begin{rem}
The idea of the proof relies on the fact that $n$ points on different level curves trace a braid conjugate to a braid in $\B A_n$.
\end{rem}

\subsection{The continuity of the Gambaudo-Ghys quasi-morphisms}
\label{SS:continuity}
The aim of this section is to prove the following result which
will be used in the proof of Theorem \ref{T:Q-Aut}.

\begin{thm}\label{T:Morse-Ham}
Let $H\in\cH$ and $\{H_k\}_{k=1}^\infty$ be a sequence of functions such that
each $H_k\in\cH$ and $H_k\xrightarrow[k\rightarrow\infty]{}H$ in
$C^1$-topology. Let $h_1$ and $h_{1,k}$ be the time-one maps of the Hamiltonian flows
generated by $H$ and $H_k$ respectively. Then
$$
\lim\limits_{k\to\infty}\overline{\Phi}_n(h_{1,k})=\overline{\Phi}_n(h_1).
$$
\end{thm}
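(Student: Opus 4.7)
The plan is to split the proof into (i) continuity of the non-homogenized quasi-morphism $\Phi_n$ evaluated at each fixed iterate $h_1^p$, and (ii) passage to the homogenization by the standard defect estimate. My first step is to observe that compactly supported $C^1$-convergence $H_k\to H$ yields $C^0$-convergence of the Hamiltonian vector fields $X_{H_k}\to X_H$, so Gronwall's lemma applied to the flow ODE yields uniform convergence of the isotopies $h_{t,k}\to h_t$ on $[0,1]\times\D^2$, and consequently $C^0$-convergence of the (concatenated) isotopies from the identity to $h_{1,k}^p$ for each fixed $p\in\B N$.

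Next, for fixed $p$, I claim $\Phi_n(h_{1,k}^p)\to\Phi_n(h_1^p)$. The integrand in \eqref{eq:GG} is defined on the full-measure set of configurations $x\in\X_n(\D^2)$ for which the $n$ loops $\gamma_{x_i,z_i}$ arising from the isotopy of $h_1^p$ are pairwise disjoint. Since $\B P_n$ is discrete and small $C^0$-perturbations of a pairwise disjoint loop in $\X_n(\D^2)$ cannot alter its based-homotopy class, for such an $x$ one has $\gamma(h_{1,k}^p;x)=\gamma(h_1^p;x)$ for all $k$ sufficiently large; consequently $\varphi(\gamma(h_{1,k}^p;x))\to\varphi(\gamma(h_1^p;x))$ pointwise almost everywhere. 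To exchange the limit with the integral I apply the dominated convergence theorem, using a uniform-in-$k$ integrable majorant derived from the standard word-length bound $|\varphi(\beta)|\leq C\,\ell(\beta)$ on the generators of $\B B_n$, combined with the integrability estimates that make $\Phi_n$ well defined \cite[Lemma 4.1]{B}; the uniformity of the majorant follows from $\sup_k \|X_{H_k}\|_{C^0}<\infty$, a consequence of the $C^1$-hypothesis.

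Finally, I upgrade from $\Phi_n$ to $\overline{\Phi}_n$ via a diagonal argument. The defect inequality for homogenization yields
$$
\left|\overline{\Phi}_n(g)-\frac{\Phi_n(g^p)}{p}\right|\leq\frac{D_{\Phi_n}}{p}
$$
uniformly in $g\in\B G$. Given $\eps>0$, first choose $p$ with $D_{\Phi_n}/p<\eps/3$; by the previous step, then choose $K$ so that $|\Phi_n(h_{1,k}^p)-\Phi_n(h_1^p)|<p\eps/3$ for $k\geq K$. The triangle inequality gives $|\overline{\Phi}_n(h_{1,k})-\overline{\Phi}_n(h_1)|<\eps$ for all $k\geq K$, which is the desired conclusion.

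The main obstacle is the dominated-convergence step. Pointwise almost-everywhere equality of the braids follows cheaply from $C^0$-convergence of the isotopies and the discreteness of $\B P_n$, but producing an integrable majorant that is uniform in $k$ requires a quantitative bound on the word-length of $\gamma(h_{1,k}^p;x)$ in terms of the configuration $x$ and $\|X_{H_k}\|_{C^0}$. This is precisely where the $C^1$-hypothesis on the Hamiltonians (as opposed to a weaker topology such as $C^0$) is used; any slack here would propagate through both the pointwise and the dominated parts of the argument.
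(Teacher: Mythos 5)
Your two-step scheme (continuity of $\Phi_n$ at each fixed power $p$, then the uniform homogenization estimate $|\overline{\Phi}_n(g)-\Phi_n(g^p)/p|\leq D_{\Phi_n}/p$) is a legitimate alternative to the paper's argument, and your pointwise step is fine: for almost every $x$ the configuration loop of $h_1^p$ has a positive minimal separation, so $C^0$-convergence of the isotopies (Gronwall, using the Lipschitz constant of $X_H$, which does not depend on $k$) forces $\gamma(h_{1,k}^p;x)=\gamma(h_1^p;x)$ for $k$ large. The genuine gap is exactly where you place the main obstacle: the dominated-convergence step. The word-length bound gives, as in Proposition \ref{thm:crossing-number-inequality}, an estimate of the form $|\varphi(\gamma(h_{1,k}^p;x))|\leq C\sum_{i\neq j}\left(4+W_k^{ij}(x)\right)$, where $W_k^{ij}(x)$ is the pairwise winding, bounded by $2\|X_{H_k}\|_{C^0}\int_0^p \|h_{t,k}(x_i)-h_{t,k}(x_j)\|^{-1}dt$. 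This quantity depends on the flow $h_{t,k}$ itself, not only on $\|X_{H_k}\|_{C^0}$: the regions of configuration space where strands wind a lot move with $k$, so $\sup_k\|X_{H_k}\|_{C^0}<\infty$ yields only a uniform $L^1$-bound on the majorants, and a uniform $L^1$-bound together with almost-everywhere convergence does not allow the exchange of limit and integral. A majorant that is pointwise uniform in $k$ would instead require $\sup_k\operatorname{Lip}(X_{H_k})<\infty$, i.e.\ a uniform $C^2$-bound on the $H_k$, which the $C^1$-hypothesis does not provide. So the claimed source of uniformity is not valid as stated.

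The step can be repaired, but it needs an idea you have not supplied: uniform integrability in place of a single majorant. Since each $h_{t,k}$ is area-preserving, Minkowski's integral inequality and the change of variables give, for any $1<q<2$, $\int_{\D^2\times\D^2}\left(\int_0^p\|h_{t,k}(x_i)-h_{t,k}(x_j)\|^{-1}dt\right)^q dx_i\,dx_j\leq p^q\int_{\D^2\times\D^2}\|y-z\|^{-q}dy\,dz<\infty$, uniformly in $k$; this uniform $L^q$-bound plus Vitali's convergence theorem closes the argument. For comparison, the paper never fixes a power and passes to a pointwise limit of braids: it proves the Lipschitz estimate $|\overline{\Phi}_n(g)|\leq C_1\fL(g)\leq C_3\,\mathbf{d}_2(\Id,g)$ via the Gambaudo--Lagrange functional, applies the quasi-morphism property to $\overline{\Phi}_n(f^p h^{-p})$, and shows by a direct flow computation (Lemma \ref{lem:epsilon-close}) that $\mathbf{d}_2(f_1^p,h_1^p)$ is small when the Hamiltonians are $C^1$-close, with the required closeness $\delta_p$ allowed to depend on $p$ (note the factor $(\mathfrak{M}_{Df_t})^p$ there, which is the paper's way of paying for exactly the phenomenon that blocks your uniform majorant). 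Either route works, but as written your proof has a hole at the limit--integral exchange.
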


The proof is presented below as a sequence of assertions and the theorem follows immediately from
Proposition \ref{P:Morse-Ham}.

Let $g\in \B G$ and $\{g_t\}_{t=0}^1\in\B G$ such that $g_0=\Id$ and $g_1=g$.
We denote
$$
\fG(\{g_t\}):=
\int\limits_{\D^2\times\D^2}\frac{1}{2\pi}
\int\limits_0^1 \left\|
\frac{\partial}{\partial t} \left(\frac{g_t(x)-g_t(y)}{\|g_t(x)-g_t(y)\|}
\right)\right\|dt dx dy,
$$
where $\|\cdot\|$ is the Euclidean norm. Lemma 1 in \cite{GL} shows that
$\fG(\{g_t\})$ is well defined. Denote
$$
\fG(g):=\inf_{g_t} \fG(\{g_t\}),
$$
where the infimum is taken over all isotopies $g_t\in \B G$ joining the
identity with $g$. Lemma 2 in \cite{GL} states that for all
$g$ and $h$ in $\B G$
$$
\fG(gh)\leq \fG(g)+\fG(h).
$$
Thus the following limit exists:
$$
\fL(g)=\lim\limits_{n\rightarrow \infty}\frac{\fG(g^n)}{n}.
$$

\begin{prop}\label{thm:crossing-number-inequality}
Let $g\in \B G$, and let $\varphi_n\colon\B B_n\rightarrow\B R$
be a homogeneous quasi-morphism. Then
$$\left|\overline{\Phi}_n(g)\right|\leq C_1 \fL(g),$$
where $C_1>0$ is independent of $g$.
\end{prop}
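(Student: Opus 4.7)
My plan is to establish a non-asymptotic bound of the form
$$|\Phi_n(g)| \leq C \fG(\{g_t\}) + K$$
for every isotopy $\{g_t\}$ from $\Id$ to $g$, with $C$ and $K$ depending only on $\varphi_n$ and $n$, and then pass to the asymptotic limit. The whole strategy mimics the way one bounds quasi-morphism values by word length, with $\fG$ playing the role of a geometric word length.

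First I would estimate the integrand of $\Phi_n(g)$ pointwise. Since $\varphi_n$ is a homogeneous quasi-morphism on $\B B_n$, the standard estimate gives $|\varphi_n(\beta)| \leq D_{\varphi_n}\,\ell(\beta) + K_0$, where $\ell(\beta)$ is the word length in the Artin generators $\sigma_1,\ldots,\sigma_{n-1}$ and $K_0 = \max_i |\varphi_n(\sigma_i)|$. For the geometric braid $\gamma(g;x_1,\ldots,x_n)$ obtained from the isotopy, $\ell$ is controlled by the total number of crossings between pairs of strands in any generic projection; each crossing between strands $i$ and $j$ corresponds to a time at which the unit vector
$$\theta_{ij}(t) := \frac{g_t(x_i)-g_t(x_j)}{\|g_t(x_i)-g_t(x_j)\|}$$
meets a fixed line in $S^1$, and such events happen at most $\frac{1}{\pi}\int_0^1 \|\partial_t \theta_{ij}\|\,dt$ times. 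Summing over pairs,
$$\ell(\gamma(g;x)) \leq \frac{1}{\pi}\sum_{i<j}\int_0^1 \|\partial_t \theta_{ij}\|\,dt.$$

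Second, I would integrate this pointwise bound over $x\in\X_n(\D^2)$, extend the integration to $(\D^2)^n$ (the diagonal has measure zero) and apply Fubini. Each pairwise term then contributes at most $2\pi\,\vol(\D^2)^{n-2}\fG(\{g_t\})$, and combined with the quasi-morphism estimate above I obtain
$$|\Phi_n(g)| \leq \int_{\X_n(\D^2)} |\varphi_n(\gamma(g;x))|\,dx \leq C_n\,\fG(\{g_t\}) + K'.$$
Taking the infimum over all isotopies $\{g_t\}$ yields $|\Phi_n(g)| \leq C_n \fG(g) + K'$. Applying this bound to $g^p$ and dividing by $p$,
$$\frac{|\Phi_n(g^p)|}{p} \leq C_n \frac{\fG(g^p)}{p} + \frac{K'}{p},$$
and letting $p\to\infty$, the left side tends to $|\overline{\Phi}_n(g)|$, the first term on the right to $C_n\,\fL(g)$, and the error vanishes. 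Setting $C_1 := C_n$ gives the claimed inequality.

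The main obstacle is the pointwise estimate $\ell(\gamma(g;x)) \leq \frac{1}{\pi}\sum_{i<j}\int \|\partial_t \theta_{ij}\|\,dt$. Word length in Artin generators is an algebraic invariant while the right-hand side is analytic, and linking them requires realising $\gamma(g;x)$ as a genuine geometric braid in $\D^2\times[0,1]$, choosing a generic projection in which each crossing is recorded by a transversal sign change of some $\theta_{ij}$, and verifying that the number of such sign changes is uniformly controlled by the angular variation. A secondary technical point is justifying the interchange of limit and integral when homogenising $\Phi_n$; this is handled by the uniform bound $|\varphi_n(\gamma(g^p;x))| \leq D_{\varphi_n} p\,\ell(\gamma(g;x)) + O(p)$, which together with dominated convergence (via the $L^1$ bound from $\fG$) secures the passage to the limit.
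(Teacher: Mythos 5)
Your proposal is correct and follows essentially the same route as the paper's proof (itself an adaptation of Lemma 4 of Benaim--Gambaudo): bound $|\varphi_n|$ linearly by Artin word length, bound the word length of $\gamma(g;x)$ by crossing counts controlled through the angular variation of the pairwise unit vectors $u(t,x_i,x_j)$, integrate over the configuration space to compare with $\fG(\{g_t\})$, take the infimum over isotopies, and homogenize via $g^p$. Your total-variation bound $\frac{1}{\pi}\int_0^1\|\partial_t\theta_{ij}\|\,dt$ is exactly the paper's change of variables $\frac{1}{2\pi}\int_{\B S^1}\#\{u_{x_i,x_j}^{-1}(\omega)\}\,d\omega$ (with the paper's explicit ``$+4$'' for closing the braid absorbed into your additive constant, which dies after dividing by $p$), so the two arguments coincide in substance.
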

\begin{proof}
The proof of this proposition is very similar to the proof of Lemma 4 in \cite{BG}. Let $x=(x_1,\ldots,x_n)\in X_n(\D^2)$ and $g_t\in \B G$ joining the
identity with $g$. Following \cite[Section 4.1]{BG}, to every $t\in[0,1]$ and $1\leq i,j\leq n$ we associate the unit vector
$$u(t,x_i,x_j)=\frac{g_t(x_i)-g_t(x_j)}{\|g_t(x_i)-g_t(x_j)\|}\thinspace.$$
Consider a map $u_{x_i,x_j}\colon[0,1]\to\B S^1$ where $t\to u(t,x_i,x_j)$. The change of variables induced by the map $u_{x_i,x_j}$ leads to the equality
\begin{equation}\label{eq:inequality_G}
\fG(\{g_t\})=
\int\limits_{\D^2\times\D^2}\frac{1}{2\pi}
\int\limits_{\B S^1}\#\{u^{-1}_{x_i,x_j}(\omega)\} d\omega dx_i dx_j,
\end{equation}
where $\#$ stands for cardinality. Consider the braid $\gamma(g;x)$
defined by the isotopy $g_t$. Let $S\subset \B S^1$ be the set of all points
$\omega$ in the circle for which the projection onto the line orthogonal to
$\omega$ is injective on the set $\{x_1,...,x_n\}$.  The number of times the
$i$-th strand overcrosses the $j$-th strand is bounded by
$\#\{u^{-1}_{x_i,x_j}(\omega)\}+4$, where $\omega \in \B S^1\setminus S$. The
constant $4$ comes from the fact that we close a path $g_t(x)$. Let
$l(\g(g;x))$ denote the length of the braid $\g(g;x)$ with respect to the Artin
generators $\{\s_i\}_{i=1}^{n-1}$ of $\B B_n$. Note that the measure of the set
$S$ is zero. It follows that for a generic $\omega\in\B S^1$, i.e. for
$\omega\in \B S^1\setminus S$ we have
$$
l(\g(g;x))\leq
\displaystyle\sum_{i\neq j}^{n}(\#\{u^{-1}_{x_i,x_j}(\omega)\}+4),
$$
see also the proof of Lemma 4 in \cite{BG}. Consequently, we have that
\begin{equation}\label{eq:inequality1}
l(\g(g;x))\leq \sum_{i\neq j}^n
\int_{\B S^1}(\#\{u^{-1}_{x_i,x_j}(\omega)\}+4)d\omega.
\end{equation}
Note that since $\varphi_n$ is a homogeneous quasi-morphism there exists a
positive constant $A$ such that $|\varphi_n(\a)|\leq A\cdot l(\a)$ for each
$\a\in\B B_n$.
Let $u_{p,x_i,x_j}$ be the above function corresponding to
the diffeomorphism $g^p$.
It follows from \eqref{eq:inequality_G} and \eqref{eq:inequality1} that
\begin{eqnarray*}
\left|\overline{\Phi}_n(g)\right|&\leq&
A\lim_{p\rightarrow\infty}\int\limits_{\X_n(\D^2)}\frac{l(\g(g^p;{x}))}{p}d{x}\\
&\leq&A\lim_{p\rightarrow\infty}\int\limits_{\X_n(\D^2)}
\frac{\sum_{i\neq j}^n\int_{\B S^1}(\#\{u_{p,x_i,x_j}^{-1}(\omega)\}+4)d\omega}{p}d{x}\\
&\leq& (2\pi)^{n-2}A \lim_{p\to \infty} \frac{1}{p}\sum_{i\neq j}^n
\int\limits_{\D^2\times \D^2}\int\limits_{\B S^1} (\#\{u_{p,x_i,x_j}^{-1}(\omega)\}+4)
d\omega dx_i dx_j\\
&\leq& (2\pi)^{n-2}A \lim_{p\to \infty} \frac{1}{p}\sum_{i\neq j}^n
2\pi \frak{G}(\{g_{t,p}\}),\\
\end{eqnarray*}
where $g_{t,p}$ is any isotopy from the identity to $g^p$. Since the above inequalities hold for any isotopy between the identity and $g^p$ we have
\begin{equation*}
\left|\overline{\Phi}_n(g)\right|\leq(2\pi)^{n-1}n(n-1)A \lim_{p\to \infty} \frac{\frak{G}(g^p)}{p}
\leq (2\pi)^{n-1}n(n-1)\fL(g)
\end{equation*}
and the proof follows.
\end{proof}

Now we will recall a definition of the right-invariant $L^2$-metric on $\B G$.
It is defined as follows. Let $$
L_2\{g_t\}:=\int_0^1 dt \left(\int_{\D^2}\|\dot{g_t}(x)\|^2 dx \right)^{\frac 12}
$$
be the $L^2$-length of a smooth isotopy $\{g_t\}_{t\in [0,1]}\subset\B G$, where
$\|\dot{g_t}(x)\|$ denotes the Euclidean length of the tangent vector
$\dot{g_t}(x)\in T_x\D^2$. Observe that this length is right-invariant, that
is, $L_2\{g_t\circ f\}=L_2\{g_t\}$ for any $f\in \B G$. It defines a
non-degenerate right-invariant metric on $\B G$ by
$$
{\bf d}_2(g_0,g_1):=\inf_{g_t}L_2\{g_t\},
$$
where the infimum is taken over all
paths from $g_0$ to $g_1$. See Arnol'd-Khesin \cite{AK} for a detailed discussion.

\begin{cor}\label{cor:distance-inequality}
Let $g\in\B G$, and let $\varphi\colon\B B_n\to\B R$
be a homogeneous quasi-morphism. Then
$$|\overline{\Phi}_n(g)|\leq C_3 \B d_2(\Id,g),$$
where $C_3>0$ does not depend on $g$.
\end{cor}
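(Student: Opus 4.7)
The plan is to combine Proposition \ref{thm:crossing-number-inequality} with an estimate of the form $\fL(g) \leq C_2\, \B d_2(\Id,g)$; the corollary will then follow immediately with $C_3 = C_1 C_2$. Thus the whole task reduces to controlling the stable ``crossing functional'' $\fL$ by the right-invariant $L^2$-distance.

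The main step is to show that, for every smooth isotopy $\{g_t\}_{t\in[0,1]}\subset\B G$ from $\Id$ to $g$, one has $\fG(\{g_t\}) \leq C_2\, L_2\{g_t\}$. I would begin with the pointwise estimate
$$\left\|\frac{\partial}{\partial t}\left(\frac{g_t(x)-g_t(y)}{\|g_t(x)-g_t(y)\|}\right)\right\| \leq \frac{\|\dot g_t(x)\|+\|\dot g_t(y)\|}{\|g_t(x)-g_t(y)\|},$$
which follows from the standard formula for the derivative of a unit vector together with the triangle inequality. The main difficulty is then the singularity of the right-hand side along the diagonal $x=y$. To tame it, I would fix $t$ and $x$ and exploit area-preservation: the substitution $y'=g_t(y)$ gives
$$\int_{\D^2}\frac{dy}{\|g_t(x)-g_t(y)\|} = \int_{g_t(\D^2)}\frac{dy'}{\|g_t(x)-y'\|},$$
which is bounded by a universal constant since $\|z\|^{-1}$ is locally integrable in $\R^2$ and $g_t(\D^2)\subset\D^2$ is contained in a disc of bounded diameter. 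An application of the Cauchy--Schwarz inequality in $x$ then bounds $\int_{\D^2}\|\dot g_t(x)\|\,dx$ by a constant multiple of $(\int_{\D^2}\|\dot g_t(x)\|^2\,dx)^{1/2}$, and the symmetric term in $y$ is treated identically. Integrating in $t$ gives the claimed bound.

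Passing to infima over isotopies yields $\fG(g) \leq C_2\, \B d_2(\Id,g)$. Right-invariance of the $L^2$-length implies $\B d_2(\Id,g^p) \leq p\, \B d_2(\Id,g)$ by concatenating $p$ right-translated copies of any fixed isotopy from $\Id$ to $g$; dividing by $p$ and taking the limit in the definition of $\fL$ gives $\fL(g) \leq C_2\, \B d_2(\Id,g)$. Combined with Proposition \ref{thm:crossing-number-inequality} this completes the proof. The chief obstacle throughout is the diagonal singularity of the integrand defining $\fG$; the point is that this singularity is tamed precisely by the area-preservation property of elements of $\B G$, which also guarantees the subadditivity used to pass from $\fG$ to $\fL$.
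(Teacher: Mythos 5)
Your proposal is correct, and its skeleton coincides with the paper's: both reduce the corollary to Proposition \ref{thm:crossing-number-inequality} together with an estimate $\fL(g)\leq C_2\,\B d_2(\Id,g)$. The difference is how that estimate is obtained. The paper's proof is a one-line citation of Theorem 1 of Gambaudo--Lagrange \cite{GL}, which supplies exactly this inequality, after which $C_3=C\cdot C_1$. You instead prove the inequality directly: the pointwise bound on the derivative of the unit vector (in fact the derivative is the orthogonal projection of $\dot v/\|v\|$ onto $v^{\perp}$, so your bound holds even without invoking the triangle inequality), the change of variables $y'=g_t(y)$ using area-preservation so that the diagonal singularity reduces to the local integrability of $\|z\|^{-1}$ in $\R^2$ over a disc of bounded diameter, Cauchy--Schwarz to pass from the $L^1$ to the $L^2$ norm of $\dot g_t$, and then $\B d_2(\Id,g^p)\leq p\,\B d_2(\Id,g)$ from right-invariance (either by the triangle inequality or by concatenating right-translated isotopies, with the standard smoothing at the junctions) to descend from $\fG$ to $\fL$. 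This amounts to reproving the relevant part of the Gambaudo--Lagrange result, and the argument is sound. What each approach buys: the citation keeps the corollary short and is consistent with the paper's reliance on \cite{GL} elsewhere (well-definedness of $\fG$ and its subadditivity, which your argument still needs for the existence of the limit defining $\fL$); your version makes the corollary essentially self-contained and isolates the key mechanism, namely that area-preservation is precisely what tames the diagonal singularity in the integrand defining $\fG$.
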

\begin{proof}
It follows from \cite[Theorem 1]{GL} that for any  $ g\in\B G$ there exists a
universal constant $C>0$, such that $\fL(g)\leq C \B d_2(\Id,g)$. Now take
$C_3=C\cdot C_1$, and the statement follows from
Proposition~\ref{thm:crossing-number-inequality}.
\end{proof}

\begin{lem}\label{lem:epsilon-close}
Let $F\in\cH$. Then for any $\varepsilon>0$ and $p\in\B N$ there exists $\delta_p>0$, such that if
$H\in\cH$ is $\delta_p$-close to $F$ in $C^1$-topology, then
$$\B d_2(f_1^p,h_1^p)<\varepsilon,$$
where $f_t$ and $h_t$ are the Hamiltonian flows generated by $F$ and $H$.
\end{lem}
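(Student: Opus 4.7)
The plan is to exhibit an explicit isotopy from $f_1^p$ to $h_1^p$ in $\B G$ whose $L^2$-length is bounded by $\|H-F\|_{C^1}$ times a constant depending only on $F$ and $p$. The main subtlety is the choice of isotopy. A naive attempt such as the path $s\mapsto h_{sp}\circ f_{(1-s)p}$, or the straight line $(1-s)F+sH$ at the Hamiltonian level, would force one to control $\|Dh_t\|_{C^0}$ uniformly in $H$; but $C^1$-closeness of $H$ to $F$ does not control $\|H\|_{C^2}$, so the variational equation for $Dh_t$ yields no such bound. The key idea is to place the $f$-flow on the left of the composition, so that only the differential of $f_t$ (which depends on the fixed smooth $F$, hence is bounded for $t\in[0,p]$) ever appears.

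Concretely, I would set $\psi_s:=f_{(1-s)p}\circ h_{sp}$ for $s\in[0,1]$. Since $F$ and $H$ are autonomous, $\psi_0=f_p=f_1^p$ and $\psi_1=h_p=h_1^p$. Differentiating in $s$, with $u(s):=h_{sp}(x)$, gives
\[
\frac{d}{ds}\psi_s(x)=-p\,X_F(\psi_s(x))+p\,Df_{(1-s)p}\big|_{u(s)}X_H(u(s)).
\]
Writing $X_H=X_F+(X_H-X_F)$ and invoking the identity $Df_{(1-s)p}|_{u}X_F(u)=X_F(f_{(1-s)p}(u))$ (which is just the equivariance of a flow with its generating vector field), the $X_F$ contributions cancel and one is left with
\[
\frac{d}{ds}\psi_s(x)=p\,Df_{(1-s)p}\big|_{h_{sp}(x)}\bigl(X_H-X_F\bigr)(h_{sp}(x)).
\]

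The $L^2$-length estimate is then immediate. The change of variables $y=h_{sp}(x)$, area-preserving since $h_{sp}\in\B G$, yields
\[
\|\dot\psi_s\|_{L^2}\le p\,\|Df_{(1-s)p}\|_{C^0}\,\|X_H-X_F\|_{L^2}.
\]
Set $C_p:=\sup_{s\in[0,1]}\|Df_{(1-s)p}\|_{C^0}$, which is finite and depends only on the fixed $F$. Since $X_H-X_F=X_{H-F}$ is compactly supported in $\D^2$ with $\|X_H-X_F\|_{L^\infty}\le\|H-F\|_{C^1}$, one obtains $\|X_H-X_F\|_{L^2}\le\sqrt\pi\,\|H-F\|_{C^1}$. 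Integrating in $s$ gives $\B d_2(f_1^p,h_1^p)\le L_2\{\psi_s\}\le p\,C_p\sqrt\pi\,\|H-F\|_{C^1}$, and choosing $\de_p:=\varepsilon/(1+p\,C_p\sqrt\pi)$ closes the proof. The anticipated obstacle, namely the absence of uniform $C^1$ control on $h_t$, is precisely what the chosen isotopy circumvents: $h$ enters only through area-preservation, while the differential estimate comes entirely from $F$.
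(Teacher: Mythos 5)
Your proof is correct and is essentially the paper's argument in disguise: your path $\psi_s=f_{(1-s)p}\circ h_{sp}$ is (up to reparametrization) the right-translate by $h_1^p$ of the paper's path $t\mapsto f_t^ph_t^{-p}$, and both proofs hinge on the same point, namely that autonomy of the $F$-flow cancels the $F$-terms so that only $Df$ of the fixed flow, never $Dh$, enters the $L^2$-length estimate. The only differences are cosmetic: you compute the derivative of the isotopy directly and use a change of variables with an $L^2$ bound on $X_{H-F}$, whereas the paper invokes Polterovich's composition formula for Hamiltonians and a pointwise sup bound on $\nabla F-\nabla H$.
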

\begin{proof}
For the convenience we  normalize the area of $\D^2$  to be 1. It is enough to
show that for all $p\in\B N$ there exists $\delta_p>0$ such that
$$
\max\limits_{x\in\D^2}\|\nabla F_{(x)}-\nabla H_{(x)}\|
<\delta_p\Rightarrow\B d_2(f_1^p,h_1^p)<\frac{\varepsilon}{2}.
$$
Note that
$\B d_2(f_1^p,h_1^p)=\B d_2(\Id,f_1^ph_1^{-p})\leq L_2(\{f_t^ph_t^{-p}\})$.
It follows from \cite[Proposition 1.4.D]{P-book} that
\begin{equation*}
\frac{\partial (f_t^ph_t^{-p})}{\partial t}(x)=p\cdot (X_F-X_{(Hf_t^{-p})})_{(f_t^ph_t^{-p}(x))}\thinspace.
\end{equation*}
Thus
\begin{align*}
\left\|\frac{\partial (f_t^ph_t^{-p})}{\partial t}(x)\right\|
&=p\cdot \left\|(X_F-X_{(Hf_t^{-p})})_{(f_t^ph_t^{-p}(x))}\right\|\\
&=p\cdot \left\|(\nabla F-\nabla(Hf_t^{-p}))_{(f_t^ph_t^{-p}(x))}\right\|,
\end{align*}
Note that $f_t$ is an autonomous Hamiltonian flow. Thus $Ff_t(x)=F(x)$
for all $x\in\D^2$ and $t\in\B R$. It follows that
$\forall x\in\D^2$ and $\forall p\in \B Z$
$$
\nabla F_{(h_t^{-p}(x))}(Df_t^{-p})_{(f_t^ph_t^{-p}(x))}
=\nabla F_{(f_t^ph_t^{-p}(x))}.
$$
We get the following inequality:
$$
\B d_2(f_1^p,h_1^p)
\leq p\int\limits_0^1\left(\int\limits_{\D^2}\|(Df_t^{-p})_{(f_t^ph_t^{-p}(x))} \|_M^2\cdot
\|\nabla F_{(h_t^{-p}(x))}-\nabla H_{(h_t^{-p}(x))}\|^2 dx\right)^{\frac{1}{2}} dt,
$$
where $\|\cdot\|_M$ is a matrix norm. Denote by
$$
\mathfrak{M}_{Df_t}:=
\max\limits_{x\in\D^2,t\in[0,1]}\|(Df_t^{-1})_{(x)}\|_M
\quad\textrm{and}\quad
\delta_p:=\frac{\varepsilon}{2p(\mathfrak{M}_{Df_t})^p}.
$$
We get the following inequality
$$
\B d_2(f_1^p,h_1^p)\leq p(\mathfrak{M}_{Df_t})^p
\max\limits_{x\in\D^2}\|\nabla F(x)-\nabla H(x)\|\leq\frac{\varepsilon}{2}.
$$
It follows that if $H$ is $\delta_p$-close to $F$ in $C^1$-topology,
then $\B d_2(f_1^p,h_1^p)<~\varepsilon$.
\end{proof}

\begin{prop}\label{P:Morse-Ham}
Let $F\in\cH$. Then for any $\varepsilon>0$ there exists $\delta>0$,
such that if $H\in\cH$ is $\delta$-close to $F$ in $C^1$-topology then:
$$
\left|\overline{\Phi}_n(f)-\overline{\Phi}_n(h)\right|\leq\varepsilon,
$$
where $f$ and $h$ are time-one maps of flows generated by $F$ and $H$.
\end{prop}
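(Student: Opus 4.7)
The plan is to combine the homogeneity and quasi-morphism defect of $\overline{\Phi}_n$ with Corollary~\ref{cor:distance-inequality} and Lemma~\ref{lem:epsilon-close}. Let $D$ denote the defect of the homogeneous quasi-morphism $\overline{\Phi}_n\colon \B G\to\B R$. For every $g_1,g_2\in\B G$ the defining inequality for quasi-morphisms gives
$$
|\overline{\Phi}_n(g_1)-\overline{\Phi}_n(g_2)|
= |\overline{\Phi}_n(g_1g_2^{-1}\cdot g_2)-\overline{\Phi}_n(g_2)|
\leq |\overline{\Phi}_n(g_1g_2^{-1})|+D.
$$

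The key idea is to apply this inequality not to $f$ and $h$ directly (where the fixed defect $D$ would be fatal), but to their large powers, and then use homogeneity to divide by $p$. Explicitly, for any $p\in\B N$,
$$
|\overline{\Phi}_n(f)-\overline{\Phi}_n(h)|
=\frac{1}{p}|\overline{\Phi}_n(f^p)-\overline{\Phi}_n(h^p)|
\leq \frac{|\overline{\Phi}_n(f^p h^{-p})|}{p}+\frac{D}{p}.
$$
By Corollary~\ref{cor:distance-inequality} applied to $g=f^p h^{-p}$,
$$
|\overline{\Phi}_n(f^p h^{-p})|\leq C_3\,\B d_2(\Id,f^p h^{-p}) = C_3\,\B d_2(f^p,h^p),
$$
where the last equality uses the right-invariance of $\B d_2$. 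Substituting,
$$
|\overline{\Phi}_n(f)-\overline{\Phi}_n(h)|
\leq \frac{C_3\,\B d_2(f^p,h^p)}{p}+\frac{D}{p}.
$$

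Given $\varepsilon>0$, I first choose $p\in\B N$ large enough that $D/p<\varepsilon/2$; this choice of $p$ depends only on $\varepsilon$ and on the (fixed) defect $D$, not on $H$. Then I apply Lemma~\ref{lem:epsilon-close} with tolerance $\varepsilon p/(2C_3)$ in place of $\varepsilon$ to obtain a $\delta=\delta_p>0$ such that $\B d_2(f_1^p,h_1^p)<\varepsilon p/(2C_3)$ whenever $H\in\cH$ is $\delta$-close to $F$ in $C^1$-topology. Combining the two estimates gives
$$
|\overline{\Phi}_n(f)-\overline{\Phi}_n(h)|
<\frac{\varepsilon}{2}+\frac{\varepsilon}{2}=\varepsilon,
$$
as required.

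There is no serious obstacle once one has the preceding lemmas in hand; the only subtle point is realizing that the defect $D$ of $\overline{\Phi}_n$ must be defeated by homogeneity (pass to a suitable power $p$ before invoking the quasi-morphism inequality) so that both error terms can be made small simultaneously. One should also verify that $D$ and $C_3$ depend only on the fixed quasi-morphism $\varphi_n$ and not on $F$ or $H$, so that the choice of $p$ is legitimate before $\delta$ is produced.
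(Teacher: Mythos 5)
Your proposal is correct and follows essentially the same route as the paper: homogeneity to pass to $p$-th powers, the quasi-morphism defect inequality, Corollary~\ref{cor:distance-inequality} to bound $|\overline{\Phi}_n(f^ph^{-p})|$ by $C_3\,\B d_2(f^p,h^p)$, and Lemma~\ref{lem:epsilon-close} to make that distance small. The only cosmetic difference is bookkeeping: the paper fixes $p$ with $(D_{\overline{\Phi}_n}+C_3)/p<\varepsilon$ and takes $\B d_2(f^p,h^p)<1$, while you split $\varepsilon$ in half and feed the tolerance $\varepsilon p/(2C_3)$ into the lemma.
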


\begin{proof}
Fix some $\varepsilon>0$. Let $D_{\overline{\Phi}_n}$ be the defect of the
homogeneous quasi-morphism $\overline{\Phi}_n\colon\B G\to\B R$, and let $C_3$
be the constant which was defined in  Corollary \ref{cor:distance-inequality}.
Take $p\in\B N$ such that $\frac{D_{\overline{\Phi}_n}+C_3}{p}<\varepsilon$. It
follows from Lemma \ref{lem:epsilon-close} that there exists $\delta_p>0$, such
that if $H$ is $\delta_p$-close to $F$ in $C^1$-topology, then
$\B d_2(f^p,h^p)<1$. Thus we obtain
$$
\left|\overline{\Phi}_n(f)-\overline{\Phi}_n(h)\right|=
\frac{1}{p}\left|\overline{\Phi}_n(f^p)-\overline{\Phi}_n(h^p)\right|\leq
\frac{D_{\overline{\Phi}_n}+\left|\overline{\Phi}_n(f^ph^{-p})\right|}{p}.
$$
It follows from Corollary \ref{cor:distance-inequality} that
$$
\left|\overline{\Phi}_n(f^ph^{-p})\right|\leq
C_3\B d_2(Id,f^ph^{-p})=
C_3\B d_2(f^p,h^p)<C_3.
$$
Thus
$$
\left|\overline{\Phi}_n(f)-\overline{\Phi}_n(h)\right|<
\frac{D_{\overline{\Phi}_n}+C_3}{p}<\varepsilon.
$$
\end{proof}

\subsection{Proof of Theorem \ref{T:Q-Aut}}
\label{subsec-proof-T-Q-Aut}

Let $n\geq 3$ and denote by
$$
\B A_n:=\langle \eta_{i,n}\,|\,2\leq i\leq n\rangle,
$$
the abelian subgroup of $\B P_n$ generated by braids $\eta_{i,n}$ shown in
Figure~\ref{fig:braids-eta-i-n}. Let $Q(\B B_n, \B A_n)$ be the space of
homogeneous quasi-morphisms on $\B B_n$ which are identically zero on the group
$\B A_n$. It follows from \cite[Theorem 12]{BF} that the space $Q(\B B_n)$ is
infinite dimensional. The restriction of every homogeneous quasi-morphism on an
abelian group is a homomorphism, hence the space $Q(\B B_n, \B A_n)$ is also
infinite dimensional. The following theorem was proved by Ishida, see
\cite[Theorem 1.2]{I}.
\begin{thm}\label{T:GG-injectivity}
The map $\G_n\colon Q(\B B_n)\to Q(\B G)$ is injective.
\end{thm}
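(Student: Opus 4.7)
My plan is to show that for every nonzero $\varphi \in Q(\B B_n)$ one can exhibit a diffeomorphism $g \in \B G$ with $\G_n(\varphi)(g) \neq 0$. The first step is a reduction to the pure braid subgroup: since the inclusion $\B P_n \hookrightarrow \B B_n$ has index $n!$, every $b \in \B B_n$ satisfies $b^{n!} \in \B P_n$, and together with homogeneity this forces any homogeneous quasi-morphism vanishing on $\B P_n$ to vanish on $\B B_n$. Hence the restriction $Q(\iota)\colon Q(\B B_n)\to Q(\B P_n)$ is injective, and it suffices to prove that the Gambaudo--Ghys map $Q(\B P_n)\to Q(\B G)$ itself is injective. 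Pick $\beta \in \B P_n$ with $\varphi(\beta)\neq 0$; the goal is to build $g \in \B G$ for which $\overline{\Phi}_n(g)$ is a nonzero multiple of $\varphi(\beta)$.

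The construction of $g$ is a geometric realization of $\beta$. Fix $n$ pairwise disjoint round discs $D_1,\ldots,D_n\subset \B D^2$ of area $\epsilon$, centered at the basepoints $z_1,\ldots,z_n$, and let $\{g_t\}$ be a compactly supported Hamiltonian isotopy that carries each $D_i$ rigidly along the $i$-th strand of a geometric representative of $\beta$ in $\X_n(\B D^2)$, returning $D_i$ to itself at $t=1$. Set $g := g_1$. For every configuration $x=(x_1,\ldots,x_n)$ with $x_i\in D_i$ the loop defining $\gamma(g,x)$ is based-homotopic in $\X_n(\B D^2)$ to a conjugate of $\beta$ (the conjugating element being the short straight-line path from $z$ to $x$), and conjugation-invariance of homogeneous quasi-morphisms gives $\varphi(\gamma(g^p,x))=p\,\varphi(\beta)$ on this diagonal region. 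Consequently the contribution of $D_1\times\cdots\times D_n$ to $\Phi_n(g^p)/p$ equals exactly $\epsilon^n \varphi(\beta)$ for every $p$.

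To conclude one must estimate the off-diagonal contributions, coming from configurations with at least one $x_i\notin D_i$, and show that they are strictly smaller in absolute value than $\epsilon^n|\varphi(\beta)|$. The main tools are the universal linear bound $|\varphi(\alpha)|\leq A\cdot\ell(\alpha)$ on Artin length, together with the crossing-number estimate of Proposition~\ref{thm:crossing-number-inequality} applied to $g^p$. The hard part will be precisely this off-diagonal bound: a configuration missing a single disc still has $n-1$ strands tracking the motion of $\beta$, so the braid $\gamma(g,x)$ can have length comparable to that of $\beta$, while the Lebesgue measure of such configurations is of order $\epsilon^{n-1}$, a priori dominating the diagonal weight $\epsilon^n$. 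To defeat this I would either make the support of $g$ a thin tubular neighborhood of the strand trajectories, so that excursions outside $D_i$ produce braids whose length is smaller by a factor comparable to $\epsilon$, or else average the construction over many disjoint copies of the realization placed in separate regions of $\B D^2$, so that unwanted cross-terms average out. This quantitative off-diagonal estimate is the technical heart of the argument, and is the step I expect to be the main obstacle.
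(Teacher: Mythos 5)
The reduction to $\B P_n$ (via $b^{n!}\in\B P_n$ and homogeneity) and the diagonal computation (each configuration $x\in D_1\times\cdots\times D_n$ traces a conjugate of $\beta^p$, contributing exactly $\epsilon^n\varphi(\beta)$ to $\Phi_n(g^p)/p$) are both correct. But the step you yourself flag as the obstacle is a genuine gap, and your own scaling analysis shows why no smallness estimate of the kind you propose can close it. Configurations with one point outside all the discs have measure of order $\epsilon^{n-1}$, which dominates the diagonal weight $\epsilon^n$; and on such configurations the integrand is genuinely of order one, not of order $\epsilon$: the stationary point $x_i$ can be encircled by the moving strands of $\beta$, so $\gamma(g^p;x)$ is a nontrivial braid (roughly, $\beta$ with the $i$-th strand replaced by a constant strand linked with the others), and there is no reason for $\varphi$ to be small on it. Thinning the support of the isotopy does not help, because the problematic configurations lie \emph{outside} the support, and the winding of the moving strands around them is a topological quantity unaffected by the width of the tube. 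The averaging idea fares no better: the unwanted contributions are not signed in any way that produces cancellation, and placing disjoint copies of the realization only multiplies the number of such terms. So as written the argument establishes $\overline{\Phi}_n(g)=\epsilon^n\varphi(\beta)+O(\epsilon^{n-1})$, which says nothing as $\epsilon\to 0$.

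For comparison: the paper does not prove this statement at all; it quotes it as Theorem 1.2 of Ishida \cite{I}. Ishida's proof confronts exactly the terms you left unhandled, but by a different mechanism: rather than trying to make the off-diagonal contributions negligible, one computes the full integral for point-pushing diffeomorphisms as a combination, with area-dependent coefficients, of values of quasi-morphisms induced from $\varphi$ on braids with fewer moving strands (strands deleted or left stationary), and then extracts injectivity by an algebraic argument (varying the braid and the areas, with an induction over the number of strands) rather than by an estimate. If you want to complete your approach, you would need either such an exact bookkeeping of the $O(\epsilon^{n-1})$ terms or an a priori reason that $\varphi$ vanishes on all the ``stationary-strand'' braids arising from $\beta$, which is not available for an arbitrary nonzero $\varphi\in Q(\B B_n)$.
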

In particular the
map $\G_n\colon Q(\B B_n, \B A_n)\to Q(\B G)$ is also injective. It follows
from \cite[Theorem 2.7]{Mil} that Morse-type Hamiltonians form a $C^1$-dense
subset of $\cH$, hence Theorem \ref{thm:Morse-type} and Theorem \ref{T:Morse-Ham}
imply that the image of $Q(\B B_n, \B A_n)$ under the map $\G_n$ lies in
$Q(\B G, \Aut)$ and the proof follows.
\qed

\subsection{Proof of Theorem \ref{T:main}}
\label{subsec-proof-T-main}
Let us start with the following basic result which
is interesting in its own right.

\begin{lem}\label{L:abstract}
Let $\Gamma$ be a group and let $V\subset Q(\Gamma)$ be
a $k$-dimensional subspace, where $k\in \B N$. There exist elements
$g_1,\ldots,g_k\in \Gamma$ and $\psi_1,\ldots,\psi_k\in V$
such that
$$
\psi_i(g_j)=\delta_{ij},
$$
where $\delta_{ij}$ is the Kronecker delta.
\end{lem}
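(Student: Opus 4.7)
The plan is to recognize this as a pure linear-algebra/duality statement and to reduce it to the fact that a finite-dimensional vector space is separated by its dual.

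First I would consider the evaluation map
\[
\ev \colon \Gamma \to V^{*}, \qquad \ev(g)(\psi) := \psi(g).
\]
(Note that $\ev$ is not a homomorphism, nor does it need to be; we only use it as a set map.) The key claim is that $\ev(\Gamma)$ spans $V^{*}$ as a real vector space. Indeed, if the span $W := \mathrm{span}\,\ev(\Gamma)$ were a proper subspace of $V^{*}$, then by finite-dimensionality there would exist a nonzero element of the double dual $V^{**} \cong V$ annihilating $W$. Unwinding the identification, this means there would exist a nonzero $\psi \in V$ with $\psi(g) = 0$ for every $g \in \Gamma$, contradicting $\psi \ne 0$.

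Once $\ev(\Gamma)$ is known to span $V^{*}$, I can extract from $\ev(\Gamma)$ a basis of $V^{*}$: pick $g_1,\ldots,g_k \in \Gamma$ such that $\ev(g_1),\ldots,\ev(g_k)$ form a basis of the $k$-dimensional space $V^{*}$. Then let $\psi_1,\ldots,\psi_k \in V$ be the dual basis of $V$ with respect to this basis of $V^{*}$, characterised exactly by the relation
\[
\psi_i(g_j) \;=\; \ev(g_j)(\psi_i) \;=\; \delta_{ij}.
\]
This yields the desired elements.

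There is no serious obstacle here: the only point that requires any care is the use of finite-dimensionality (to identify $V^{**}$ with $V$ and to guarantee that any proper subspace of $V^{*}$ is cut out by a nonzero element of $V$). Everything else is formal. The lemma will then be applied in the proof of Theorem~\ref{T:main} to the $k$-dimensional subspace of $Q(\B G,\operatorname{Aut})$ spanned by $\G_3(\varphi_1),\ldots,\G_3(\varphi_k)$, yielding the diffeomorphisms $g_1,\ldots,g_k$ whose $\G_3(\varphi_i)$-values form the Kronecker delta matrix — which is exactly the input needed to construct a bi-Lipschitz homomorphism $\B Z^k \to \B G$.
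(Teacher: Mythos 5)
Your proof is correct, and it follows a genuinely different route from the paper's. The paper argues by induction on $k$: assuming $\varphi_i(g_j)=\delta_{ij}$ for $i,j\leq k-1$, it takes a $k$-th quasi-morphism completing a linearly independent family, performs the Gram--Schmidt-type correction $\psi_k=\varphi_k-\sum_{i<k}\varphi_k(g_i)\varphi_i$, finds (after rescaling) some $g_k$ with $\psi_k(g_k)=1$, and then replaces each earlier $\varphi_i$ by $\psi_i=\varphi_i-\varphi_i(g_k)\psi_k$ to restore $\psi_i(g_k)=0$. You instead run a single duality argument: the evaluation map $\ev\colon\Gamma\to V^{*}$ has image spanning $V^{*}$, since otherwise a nonzero element of $V\cong V^{**}$ annihilating the span would be a nonzero $\psi\in V$ vanishing at every $g\in\Gamma$, impossible because elements of $Q(\Gamma)$ are honest real-valued functions and the zero element is the zero function; then one extracts $g_1,\ldots,g_k$ with $\ev(g_1),\ldots,\ev(g_k)$ a basis of $V^{*}$ and takes $\psi_1,\ldots,\psi_k\in V$ to be the dual basis, so that $\psi_i(g_j)=\delta_{ij}$ by definition. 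Both arguments ultimately rest on the same elementary fact, namely that a nonzero element of $V$ is nonzero at some point of $\Gamma$ (the paper invokes it at the step ``there exists $g_k$ with $\psi_k(g_k)=1$''), and both use finite-dimensionality exactly where you say; your version packages this once and for all and is shorter and basis-free, while the paper's induction is more hands-on and exhibits the $\psi_i$ as explicit linear combinations of any chosen independent family. For the application in Theorem \ref{T:main} the two are interchangeable, since all that is used there is that the $\psi_i$ lie in the given subspace $V$ of $Q(\B G_r,\Aut)$ (the image of $\G_{3,r}$ restricted to $Q(\B B_3,\B A_3)$), which both proofs guarantee. No gap.
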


\begin{proof}
We prove the statement by induction on the dimension.
For $k=1$ it is clearly true.
Let $V\subset Q(\Gamma)$ be a $k$-dimensional subspace.
According to the induction hypothesis there exist
$\varphi_i\in V$ and $g_j\in \Gamma$ such that
$\varphi_i(g_j)=\delta_{ij}$, where $1\leq i,j\leq k-1$.

Let $\varphi_k\in V$ be such that $\varphi_1,\ldots,\varphi_k$
are linearly independent.
Let $\psi_k := \varphi_k - \sum_{i=1}^{k-1}\varphi_k(g_i)\varphi_i$.
We get that $\psi_k(g_j) = 0$ for $j=1,\ldots,k-1$.
The above linear independence implies (possibly after
multiplying $\psi_k$ by a nonzero constant) that there
exists $g_k\in \Gamma$ such that $\psi_k(g_k) = 1$.

Let $\psi_i = \varphi_i - \varphi_i(g_k)\psi_k$,
where $1\leq i\leq k-1$.  We obtain that $\psi_i(g_k) = 0$ and
$\psi_i(g_j) = \delta_{ij}$ for each $1\leq i,j\leq k-1$.
Thus the $k$-tuples $\psi_1,\ldots,\psi_k$ and $g_1,\ldots,g_k$
satisfy the statement of the lemma.
\end{proof}

Now we start proving the theorem. Let $r=\frac{1}{k}$. Denote by $\B D_r$ an open disc in the Euclidean
plane of radius $r$ centered at zero. Let
$$\B G_r:=\Diff(\B D_r,\area)$$
be
the group of smooth compactly supported area-preserving diffeomorphisms of
$\B D_r$. Gambaudo-Ghys construction is valid in the case of $G_r$ as well, i.e.
every homogeneous quasi-morphism $\varphi_n\colon\B B_n\to~\B R$ defines a
homogeneous quasi-morphism $\overline{\Phi}_{n,r}\colon\B G_r\to\B R$. This
construction defines a homomorphism $\G_{n,r}\colon Q(\B B_n)\to Q(\B G_r)$.

The vector space
\begin{equation*}\label{eq:im-GG-map}
\Im\left(\G_{n,r}|_{\thinspace Q(\B B_n, \B A_n)}\right)\subset Q(\B G_r, \Aut)
\end{equation*}
is infinite dimensional for $n\geq 3$. The proof of this fact
is identical to the proof of Theorem \ref{T:Q-Aut}. As an immediate consequence of Lemma \ref{L:abstract}
we have the following fact: for each $n\geq 3$ there exist
$\{g_{i,n}\}_{i=1}^k\in \B G_r$ and
$\{\overline{\Phi}_{i,n,r}\}_{i=1}^k\in\Im\left(\G_{n,r}|_{\thinspace Q(\B B_n, \B A_n)}\right)\subset Q(\B G_r, \Aut)$
such that
$$
\overline{\Phi}_{i,n,r}(g_{j,n})=\delta_{ij},
$$
where $\delta_{ij}$ is the Kronecker delta.

We extend every diffeomorphism in $\B G_r$ by identity on the unit disc $\D^2$
and get an injective homomorphism $i_r\colon \B G_r\to \B G$.

\begin{lem}\label{lem:Phi-3-extension}
The following identity holds on the space $Q(\B B_3,\B A_3)$
$$
\G_{3,r} = Q(i_r)\circ \G_3.
$$
Equivalently,  for each
$\overline{\Phi}_{3,r}\in\Im
\left(\G_{3,r}|_{\thinspace Q(\B B_3, \B A_3)}\right)
\subset Q(\B G, \Aut)$ and $g\in~\B G_r$
we have
$$
\overline{\Phi}_{3,r}(g)=\overline{\Phi}_3(i_r(g)),
$$
where
$\overline{\Phi}_3$ is defined using the same quasi-morphism $\varphi_3$ in
$Q(\B B_3, \B A_3)$.  \end{lem}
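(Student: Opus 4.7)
The plan is to compute both $\overline{\Phi}_{3,r}(g)$ and $\overline{\Phi}_3(i_r(g))$ from their defining Gambaudo--Ghys integrals and compare them term by term. I would choose a common basepoint $z=(z_1,z_2,z_3)\in\X_3(\B D_r)\subset\X_3(\D^2)$; since the inclusion $\B D_r\hookrightarrow\D^2$ is a homotopy equivalence, it induces the identity on fundamental groups and allows $\gamma_r(g^p;x)$ and $\gamma(i_r(g)^p;x)$ to be treated as elements of the same $\B P_3$. Split $\X_3(\D^2)=\X_3(\B D_r)\sqcup R$ with $R=\X_3(\D^2)\setminus\X_3(\B D_r)$. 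On the inside piece, the isotopy $i_r(g)_t$ coincides with $g_t$ on the support of every basepoint loop $\gamma_{x_i,z_i}$, so the two braids are literally equal; the inside integral equals $\Phi_{3,r}(g^p)$ and contributes $\overline{\Phi}_{3,r}(g)$ after homogenization.

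The content of the lemma is thus the vanishing of the $R$-contribution in the limit. For $x\in R$, let $I\subset\{1,2,3\}$ be the set of indices with $x_i\in\B D_r$. Since $i_r(g)$ is the identity off $\B D_r$, the strands with indices outside $I$ are stationary during the middle third while those with indices in $I$ remain inside $\B D_r$, so the two families never interact during the middle; the $p$-growth of $\gamma(i_r(g)^p;x)$ is carried, up to the linear-interpolation end segments, by the image of $\gamma_r(g^p;x_I)\in\B P_{|I|}$ under the ``trivial outside strand'' inclusion $\B P_{|I|}\hookrightarrow\B P_3$. When $|I|\le 1$ the group $\B P_{|I|}$ is trivial, so the whole braid stays bounded in $p$. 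When $|I|=2$ the group $\B P_2$ is cyclic, generated by a full twist of the two inside strands, whose image in $\B P_3$ is a power of a full twist on two of the three strands; such a full twist is $\B B_3$-conjugate to $\eta_{2,3}\in\B A_3$, and because $\varphi_3$ is a homogeneous (hence $\B B_3$-conjugation-invariant) quasi-morphism vanishing on $\B A_3$, it vanishes on the whole conjugacy class. Combined with the Lipschitz bound $|\varphi_3(\beta)|\le A\cdot\ell(\beta)$ and the integrability supplied by Proposition~\ref{thm:crossing-number-inequality}, dominated convergence yields
$$
\lim_{p\to\infty}\frac{1}{p}\int_R\varphi_3(\gamma(i_r(g)^p;x))\,dx=0.
$$

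The chief obstacle I anticipate is the careful bookkeeping of the end-segment (basepoint-path) braids for $x\in R$: one must check that their word-length grows strictly sub-linearly in $p$, so that after dividing by $p$ only the middle-third contribution survives, and then identify this surviving part with a power of a full twist lying in the $\B B_3$-conjugacy class of $\B A_3$. Once this is in hand, summing the inside and $R$-contributions gives $\overline{\Phi}_3(i_r(g))=\overline{\Phi}_{3,r}(g)$, which is the equivalent reformulation of $\G_{3,r}=Q(i_r)\circ\G_3$ on $Q(\B B_3,\B A_3)$ stated in the lemma.
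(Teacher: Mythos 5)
Your proposal is correct and follows essentially the same route as the paper's proof: split the configuration-space integral into $\X_3(\B D_r)$ (giving $\overline{\Phi}_{3,r}(g)$) and its complement, and show the complementary contribution vanishes because there the braid is, up to boundedly long end pieces, a conjugate of a power of a two-strand full twist, on which $\varphi_3$ vanishes by conjugation invariance and $\varphi_3(\eta_{2,3})=0$. The paper states this more briefly (writing $\g(g^p;x)=\a_{1,p,x}\b_{p,x}\eta_{2,3}^{m_{x,p}}\b_{p,x}^{-1}\a_{2,p,x}$ with the $\a$'s of bounded length and then exchanging limit and integral), so your explicit case analysis on the number of inside points and your convergence justification are refinements of, not departures from, the published argument.
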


\begin{proof}
Denote by $\X_3(\B D_r)$ the space of all ordered $3$-tuples of
distinct points in $\B D_r$. It follows that
\begin{eqnarray*}
\overline{\Phi}_3(i_r(g))
&=&\lim_{p\to +\infty}\left(\int\limits_{\X_3(\B D_r)}
\frac{\varphi_3(\g(g^p;x))}{p}\thinspace dx
+\int\limits_{\X_3(\D^2)\setminus\X_3(\B D_r)}
\frac{\varphi_3(\g(g^p;x))}{p}\thinspace dx\right)\\
&=&\overline{\Phi}_{3,r}(g)
+\int\limits_{\X_3(\D^2)\setminus\X_3(\B D_r)}\lim_{p\to +\infty}
\frac{\varphi_3(\g(g^p;x))}{p}
\thinspace dx\hspace{2mm}.
\end{eqnarray*}
Recall that by definition $i_r(g)=\Id$ on $\D^2\setminus\B D_r$. It follows
that for $x\in\X_3(\D^2)\setminus\X_3(\B D_r)$ the braid
$$
\g(g^p;x)=
\a_{1,p,x}\circ\b_{p,x}\circ\eta_{2,3}^{m_{x,p}}
\circ\b_{p,x}^{-1}\circ\a_{2,p,x},
$$
where the length of the braids
$\a_{1,p,x}$ and $\a_{2,p,x}$ is bounded for all $p$ and $x$. It follows that
for all $x\in\X_3(\D^2)\setminus\X_3(\B D_r)$ we have
$$
\lim_{p\to+\infty}\frac{\varphi_3(\g(g^p;x))}{p}=
\lim_{p\to+\infty} \frac{\varphi_3(\b_{p,x}\circ\eta_{2,3}^{m_{x,p}}\circ\b_{p,x}^{-1}))}{p}
=0,
$$
where the last equality follows from the fact that homogeneous quasi-morphisms
are invariant under conjugation and that $\varphi_3(\eta_{2,3})=0$, because
$\varphi_3\in Q(\B B_3, \B A_3)$. Hence
$$
\int\limits_{\X_3(\D^2)\setminus\X_3(\B D_r)}\lim_{p\to
+\infty}\frac{\varphi_3(\g(g^p;x))}{p}\thinspace dx=0,
$$
and the proof of the lemma follows.
\end{proof}

\begin{rem}
Let $\inc\colon\B B_{n-1}\to\B B_n$ be the standard inclusion of braid groups.
A homogeneous quasi-morphism $\varphi_n\in Q(\B B_n)$ is called \emph{kernel
quasi-morphism} if $\varphi_n(\a)=0$ for each $\a\in\Im(\inc)$. In the proof of
Lemma \ref{lem:Phi-3-extension} we used the fact that the space of kernel
quasi-morphisms on $\B B_3$ contains the space $Q(\B B_3, \B A_3)$, i.e. we used
the fact that $\varphi_3(\eta_{2,3})=~0$. If we replace the space
$Q(\B B_n, \B A_n)$ by the space of kernel quasi-mor\-phisms on $\B B_n$, then Lemma
\ref{lem:Phi-3-extension} will hold for $n>3$. In what follows we use the fact
that the space $Q(\B B_3, \B A_3)$ is infinite dimensional, and it is not known
what is the dimension of the space of kernel quasi-morphisms on $\B B_n$ for
$n>3$ (for more information about kernel quasi-morphisms see \cite{Mal}), hence
we restrict ourselves to the case $n=3$.
\end{rem}

Let us proceed with the proof of Theorem \ref{T:main}. For $1\leq j\leq k$
denote by $g_j:=i_r(g_{j,3})\in\B G$. It follows from Lemma
\ref{lem:Phi-3-extension} that
\begin{equation}\label{eq:delta-ij}
\overline{\Phi}_{i,3}(g_j)=\delta_{ij},
\end{equation}
where
$\overline{\Phi}_{i,3}\in Q(\B G, \Aut)$ is defined using the same
quasi-morphism in $Q(\B B_3, \B A_3)$ as
$\overline{\Phi}_{i,3,r}\in Q(\B G_r, \Aut)$.
Recall that the support of each $g_j$ is contained inside the disc
$\D_r$. Since $r=\frac{1}{k}\thinspace$, there exists a family of diffeomorphisms
$\{h_j\}_{j=1}^k$ in $\B G$, such that $h_j\circ g_j\circ h_j^{-1}$ and
$h_i\circ g_i\circ h_i^{-1}$ have disjoint supports for all different $i$ and
$j$ between $1$ and $k$. It follows from the definition of Calabi homomorphism,
see Remark \ref{rem:Calabi}, that there exists a family $\{g'_i\}_{i=1}^k$ of
\emph{autonomous} diffeomorphisms in $\B G$ such that

\begin{itemize}
\item
The diffeomorphisms $g'_i$ and $g'_j$ have disjoint supports for
$i\neq~j$, and the diffeomorphisms $g'_i$ and $h_j\circ g_j\circ h_j^{-1}$ have
disjoint supports for all $1\leq i,j\leq k$.
\item
For each $1\leq i\leq k$ we have $\C(g'_i)=\C(h_i\circ g_i\circ h_i^{-1})$.
\end{itemize}

Denote by $f_i:=h_i\circ g_i\circ h_i^{-1}\circ (g'_i)^{-1}$, and let $\B K:=\ker\C$.
Note that all of $f_i$ have disjoint supports, $\C(f_i)=0$, each $f_i$ lies in
$\B K$ and they generate a free abelian group of rank $k$.
Let
$$
\Psi\colon\B Z^k\to\B G
$$
where $\Psi(d_1,\ldots,d_k)=f_1^{d_1}\circ\ldots\circ f_k^{d_k}$.
It is obvious that $\Psi$ is a monomorphism whose image lies in $\B K$. In
order to complete the proof of the theorem it is left to show that $\Psi$ is a
bi-Lipschitz map, i.e. we are going to show that there exists a constant
$A\geq 1$ such that
$$
A^{-1}\sum_{i=1}^k |d_i|\leq\|f_1^{d_1}\circ\ldots\circ f_k^{d_k}\|_{\OP{Aut}}
\leq A\sum_{i=1}^k |d_i|\thinspace.
$$
We have the following equalities
$$
\overline{\Phi}_{i,3}(f_j)
=\overline{\Phi}_{i,3}(h_j\circ g_j\circ h_j^{-1}\circ (g'_j)^{-1})
=\overline{\Phi}_{i,3}(g_j)+\overline{\Phi}_{i,3}((g'_j)^{-1})
=\overline{\Phi}_{i,3}(g_j)
=\delta_{ij}.
$$
The second equality follows from the fact that every homogeneous quasi-morphism
is invariant under conjugation and it behaves as a homomorphism on every pair
of commuting elements. The third equality follows from the fact that
$(g'_j)^{-1}$ is an autonomous diffeomorphism and
$\overline{\Phi}_{i,3}\in Q(\B G,\Aut)$, and the forth equality is \eqref{eq:delta-ij}.
Since all $f_i$
commute with each other and $\overline{\Phi}_{i,3}(f_j)=\delta_{ij}$, we obtain
$$
\|f_1^{d_1}\circ\ldots\circ f_k^{d_k}\|_{\OP{Aut}}
\geq \frac{|\overline{\Phi}_{i,3}(f_1^{d_1}\circ\ldots\circ
f_k^{d_k})|}{D_{\overline{\Phi}_{i,3}}}
=\frac{|d_i|}{D_{\overline{\Phi}_{i,3}}}\thinspace,
$$
where
$D_{\overline{\Phi}_{i,3}}$ is the defect of the quasi-morphism
$\overline{\Phi}_{i,3}$. The defect $D_{\overline{\Phi}_{i,3}}\neq~0$ because
each $\overline{\Phi}_{i,3}\in Q(\B G,\Aut)$ and hence it is not a
homomorphism. We denote by $\mathfrak{D}_k:=\max\limits_i
D_{\overline{\Phi}_{i,3}}$ and obtain the following inequality
\begin{equation}\label{eq:L-norms}
\|f_1^{d_1}\circ\ldots\circ f_k^{d_k}\|_{\OP{Aut}}
\geq(k\cdot\mathfrak{D}_k)^{-1}\sum_{i=1}^k |d_i|\thinspace .
\end{equation}
Denote by $\mathfrak{M}_f:=\max\limits_i\|f_i\|_{\OP{Aut}}$. Now we
have the following inequality
\begin{equation}\label{eq:easy-ineq}
\|f_1^{d_1}\circ\ldots\circ f_k^{d_k}\|_{\OP{Aut}}
\leq \sum_{i=1}^k |d_i|\cdot\|f_i\|_{\OP{Aut}}
\leq \mathfrak{M}_f\cdot \sum_{i=1}^k |d_i|\thinspace .
\end{equation}
Inequalities \eqref{eq:L-norms} and \eqref{eq:easy-ineq} conclude the proof of the theorem.
\qed

\begin{rem}
In fact, the proof of Theorem \ref{T:main} shows that for each
$k\in~\B N$ there exists a bi-Lipschitz embedding of $\B Z^k$ into $(\B G, \B d_{\Aut})$ with the image contained in a $C^0$-neighborhood of the identity diffeomorphism in $\B G$.
\end{rem}

\section{A relation between $Q(\B G)$ and $Q(\B F_2)$}
\label{sec-relation}
Let $\B F_2$ denote the free group on two generators, and let $\s_1$ and $\s_2$
denote the Artin generators of $\B B_3$. The center of both $\B B_3$ and
$\B P_3$ is a cyclic group generated by an element
$\Delta=\eta_{2,3}\cdot\eta_{3,3}$. One can show
that $\B P_3$ is generated by $\s_1^2,\s_2^2, \Delta$ and
$\B P_3\cong \B F_2\times Z(\B P_3)$, where $\B F_2=\langle\s_1^2,\s_2^2\rangle$.
In what follows we describe a monomorphism from $\B F_2$ to $\B G$ and study
the induced map from $Q(\B G)$ to $Q(\B F_2)$, which is infinite dimensional
by the theorem of Brooks \cite{Br}.

Let $U_1,U_2,U_3\subset \B D^2$ be open subsets
each diffeomorphic to a disc, such that $\area(U_i)\geq \frac{\pi}{4}$. We also
require that $z_i\in U_i$, where $z=(z_1,z_2,z_3)$ is a basepoint for
$\pi_1(\X_3(\D^2))\cong\B P_3$. For pairs $(U_1,U_2)$ and $(U_2,U_3)$ let
$W_{12}\subset V_{12}$ and $W_{23}\subset V_{23}$ be pairs of two open subsets
of $\D^2$, each diffeomorphic to a disc, such that $U_1\cup U_2\subset W_{12}$,
$U_2\cup U_3\subset W_{23}$, $V_{12}\cap U_3=\emptyset$ and $V_{23}\cap
U_1=\emptyset$. Let $\{h_t\}$ be a path in $\B G$ which rotates $W_{12}$ once,
and is identity on the outside of $V_{12}$ and on a small neighborhood of
$\partial V_{12}$. Similarly, let $\{h'_t\}$ be a path in $\B G$ which rotates
$W_{23}$ once, and is identity on the outside of $V_{23}$ and on a small
neighborhood of $\partial V_{23}$.

Let $U:=U_1\cup U_2\cup U_3$ and let
$\B G_U$ be the subgroup of $\B G$ which consists of diffeomorphisms
which \emph{preserve pointwise} the set  $U$. Let
$$Tr\colon\B G_U\to \B P_3\thinspace,$$
where $Tr(g)$ is the homotopy class of the loop $(g_t(z_1),g_t(z_2),g_t(z_3))$
in $\X_3(\D^2)$. Here $\{g_t\}_{t=0}^1$ is any isotopy from the identity map to
$g$. Since the map $Tr$ is a homomorphism, which sends $h_1$ to $\s_1^2$ and
$h'_1$ to $\s_2^2$, the diffeomorphisms $h_1$ and $h'_1$ generate a free group
in $\B G$. Let
$$s_U\colon\B F_2\to\B G$$
be a monomorphism, where $s_U(\s_1^2)=h_1$ and $s_U(\s_2^2)=h'_1$. Denote by $a_i=\area(U_i)$ and
$a=\area(U)$.

\begin{thm}\label{T:inf-dim-limit}
Let $Q(s_U)\colon Q(\B G,\Aut)\to Q(\B F_2)$ be the map induced
by the homomorphism $s_U$. Then
$$\lim_{a\to\pi}\dim(\Im(Q(s_U)))=\infty\thinspace.$$
\end{thm}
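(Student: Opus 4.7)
The plan is to use the infinite-dimensional subspace $\G_3(Q(\B B_3,\B A_3))\subset Q(\B G,\Aut)$ from Section~\ref{subsec-proof-T-Q-Aut} and show that the pullback map $Q(s_U)\colon Q(\B G,\Aut)\to Q(\B F_2)$ remains injective on larger and larger subspaces as $a\to\pi$. The key observation is that $\B P_3\cong\B F_2\times Z(\B P_3)$ with $Z(\B P_3)\subset\B A_3$, so any $\varphi\in Q(\B B_3,\B A_3)$ vanishes on the central factor and is determined by its restriction to $\B F_2=\langle\s_1^2,\s_2^2\rangle$; hence for every $k\in\B N$ I can pick $\varphi_1,\dots,\varphi_k\in Q(\B B_3,\B A_3)$ whose restrictions to $\B F_2$ are linearly independent in $Q(\B F_2)$. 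Set $\overline{\Phi}_i:=\G_3(\varphi_i)$.

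Next, for $g\in\B F_2$ I would analyze $\overline{\Phi}_i(s_U(g))$ via the Gambaudo--Ghys integral, splitting $\X_3(\D^2)$ by the location of $(x_1,x_2,x_3)$ into three types: (I) one point in each distinct $U_i$ (total measure $6a_1a_2a_3$); (II) all three points in $U$ but with two or three sharing the same $U_i$ (measure $a^3-6a_1a_2a_3$); (III) at least one $x_j$ outside $U$ (measure $\le\pi^3-a^3$). Since each $U_i$ sits in $W_{12}$ or $W_{23}$ and the rotations $h_1,h_1'$ act there as rotations by $2\pi$, the map $s_U(g)$ fixes $U$ pointwise. In case (I), the braid $\gamma(s_U(g);x)$ is then a $\B B_3$-conjugate of $g\in\B F_2\subset\B P_3$ (the conjugation coming from the basepoint-change path $z\to x$), so by conjugation-invariance of homogeneous quasi-morphisms the integrand equals $p\varphi_i(g)$, contributing $6a_1a_2a_3\varphi_i(g)$. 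In case (II), clustered strands are rigidly rotated together, producing only $\s_i^{2m}$-type (from a pair in one $W_{ij}$) or $\Delta^{2m}$-type (from a triple) braids; since $\s_1^2\in\B A_3$, $\s_2^2$ is $\B B_3$-conjugate to $\s_1^2$, $\Delta^2\in Z(\B P_3)\subset\B A_3$, and all these elements commute in the subgroup generated, every such product is $\B B_3$-conjugate to an element of $\B A_3$, so $\varphi_i$ vanishes. In case (III), the bounded total twist of $s_U(g)$ yields a uniform length bound $\ell(\gamma(s_U(g)^p;x))\le L(g)\cdot p$, so $|\varphi_i(\gamma(s_U(g)^p;x))|/p\le D_{\varphi_i}L(g)$ uniformly in $x,p$, making the contribution $O_g(\pi^3-a^3)\to 0$. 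Altogether,
$$
\overline{\Phi}_i(s_U(g)) = 6a_1a_2a_3\,\varphi_i(g) + \eps_i(g,U), \qquad \eps_i(g,U)\xrightarrow[a\to\pi]{}0.
$$

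Finally, I would pick $g^{(1)},\dots,g^{(k)}\in\B F_2$ making the matrix $M_{ij}:=\varphi_i(g^{(j)})$ invertible, which is possible by the chosen linear independence of $\varphi_i|_{\B F_2}$. The constraint $a_i\ge\pi/4$ forces $6a_1a_2a_3\ge 6(\pi/4)^3$, bounded below, so the matrix $[\overline{\Phi}_i(s_U(g^{(j)}))]=6a_1a_2a_3\,M+[\eps_i(g^{(j)},U)]$ is an arbitrarily small perturbation of an invertible matrix for $a$ close enough to $\pi$ and hence itself invertible, forcing $Q(s_U)(\overline{\Phi}_1),\dots,Q(s_U)(\overline{\Phi}_k)$ to be linearly independent in $Q(\B F_2)$. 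Thus $\dim\Im Q(s_U)\ge k$ for such $a$, and since $k$ was arbitrary the limit is $\infty$. The hardest part will be the braid analysis in case (II), particularly for clusters meeting $U_2\subset W_{12}\cap W_{23}$ where both rotations act nontrivially; one must verify that the commuting product of $\s_i^{2m}$ and $\Delta^{2m}$ factors stays in the $\B B_3$-conjugacy-closure of $\B A_3$. A secondary issue is ensuring the length bound $L(g)$ in case (III) is independent of $x$, which should follow by arranging $V_{ij}$ to be close to $W_{ij}$ so that the annular twist regions $V_{ij}\setminus W_{ij}$ are negligible as $a\to\pi$.
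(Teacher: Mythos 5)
Your proposal is correct and is essentially the paper's own argument: the paper likewise chooses $\varphi_i\in Q(\B B_3,\B A_3)$ together with test words in $\s_1^2,\s_2^2$ (normalized via Lemma \ref{L:abstract} so that the evaluation matrix is close to $6a_1a_2a_3$ times the identity), splits the Gambaudo--Ghys integral into $\X_3(U)$ (where conjugation-invariance and vanishing on $\B A_3$ give the main term, your cases (I)--(II)) and its complement (controlled by a uniform linear-in-$p$ crossing/length bound, your case (III)), and concludes by a small-perturbation non-singularity argument. The only differences are cosmetic: you use a general invertible matrix $M$ instead of the paper's dual basis $\varphi_i(\b_j)=\delta_{ij}$, and you spell out the clustered-configuration analysis that the paper treats in one line.
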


\begin{proof}
Let $N\in \B N$. We are going to show that there exists $\varepsilon>0$ such
that whenever $|a-\pi|<\varepsilon$ we have $\dim(\Im(Q(s_U)))\geq N$. Notice
that every $\varphi\in Q(\B B_3, \B A_3)$ vanishes on $\Delta$. Since
$\dim(Q(\B B_3, \B A_3))=\infty$ and $\B P_3$ is a subgroup of finite index in
$\B B_3$, it follows from Lemma \ref{L:abstract} that there exists a family of quasi-morphisms
$\{\varphi_i\}_{i=1}^N$ in $Q(\B B_3, \B A_3)$ and a family of braids
$\{\b_i\}_{i=1}^N$ which are words in $\s_1^2,\s_2^2$, such that
$\varphi_i(\b_j)=\delta_{ij}$. Denote by $g_{U,i}:=s_U(\b_i)$, i.e. each
$g_{U,i}$ is a time-one map of an isotopy $g_{t,i}$ which is a composition of a
number of isotopies $h_t$ and $h'_t$ that twist $U_j$'s in the form of the
braid $\b_i$. We are going to show that there exists $\varepsilon>0$, such that
if $|a-\pi|<\varepsilon$ then the matrix
$$
M_{N\times N}:=\left(\G_3(\varphi_i)(g_{U,j})\right)_{1\leq i\leq j\leq N}
$$
is non-singular, where $\G_3\colon Q(\B B_3, \B A_3)\to Q(\B G,\Aut)$. This will
imply that $\{Q(s_U)(\G_3(\varphi_i))\}_{i=1}^N$ are linearly independent in $Q(\B F_2)$.

It is easy to show that there exists $\varepsilon'>0$, such that each $N\times N$
matrix with entries $m_{ij}$ is non-singular provided that $1<m_{ii}<~12$
and $|m_{ij}|<\varepsilon'$ for all $i\neq j$. Denote by
$\X_3(U):=\X_3(\bigcup\limits_{i=1}^3 U_i)$. Since each
$\varphi_i\in Q(\B B_3, \B A_3)$ is invariant under conjugation in $\B B_3$
and vanishes on the braid $\eta_{3,3}$ we have
$$
\int\limits_{\X_3(U)}\lim_{p\to +\infty}
\frac{\varphi_i(\g(g_{U,j}^p;x))}{p}\thinspace dx=
\begin{cases}
0 &\text{ if } i\neq j\\
6a_1\cdot a_2\cdot a_3 &\text{ if } i=j\thinspace.
\end{cases}
$$
It follows that
\begin{equation}\label{eq:cases-inf-dim-thm}
\G_3(\varphi_i)(g_{U,j})=
\begin{cases}
\int\limits_{X_3(\B D^2)\setminus X_3(U)}\lim\limits_{p\to +\infty}
\frac{\varphi_i(\g(g_{U,j}^p;x))}{p}\thinspace dx
&\text{ if } i\neq j\\
6a_1\cdot a_2\cdot a_3 +\int\limits_{X_3(\B D^2)\setminus X_3(U)}
\lim\limits_{p\to +\infty}\frac{\varphi_i(\g(g_{U,i}^p;x))}{p}\thinspace dx
&\text{ otherwise }.
\end{cases}
\end{equation}
For $x\in \X_3(\D^2)$ denote by $cr(g_{U,i}^p;x)$ the length of the word in generators $\s_1, \s_2$, which represents the braid $\g(g_{U,i}^p;x)$ and is given by $p$ concatenations of flows $g_{t,i}$. Let
$$cr(\b_i):=cr(g_{U,i};z)\quad\text{ and }\quad \mathfrak{M}_{cr}:=\max_{1\leq i\leq N}{cr(\b_i)}\thinspace,$$
where $z=(z_1,z_2,z_3)$. It follows from the construction of diffeomorphisms $g_{U,i}$, that for each $x\in \X_3(\D^2)$ and $1\leq i\leq N$ we have
$$\lim_{p\to +\infty}\frac{cr(\g(g_{U,i}^p;x))}{p}\leq \mathfrak{M}_{cr}\thinspace .$$
For each $\g\in\B B_3$ denote by $l(\g)$ the word length of $\g$ with respect to the generating set $\s_1,\s_2$. Since each $\varphi_i$ is a homogenous quasi-morphism that vanishes on $\s_1,\s_2$, we obtain
$$|\varphi_i(\g)|\leq D_{\varphi_i}\cdot l(\g)\thinspace .$$
Denote by
$$\mathfrak{M}_D:=\max_{1\leq i\leq N}{D_{\varphi_i}}\thinspace.$$
It follows that for each $x\in \X_3(\D^2)$ and $1\leq i,j\leq N$ we have
\begin{eqnarray*}
&\lim\limits_{p\to +\infty}\frac{|\varphi_i(\g(g_{U,j}^p;x))|}{p}\leq \mathfrak{M}_D\lim\limits_{p\to +\infty}\frac{|l(\g(g_{U,j}^p;x))|}{p}\\
&\leq \mathfrak{M}_D\lim\limits_{p\to +\infty}\frac{cr(\g(g_{U,j}^p;x))}{p}\leq \mathfrak{M}_D\cdot \mathfrak{M}_{cr}\thinspace .
\end{eqnarray*}
Take $\varepsilon>0$, such that
$$\mathfrak{M}_D\cdot \mathfrak{M}_{cr}\cdot\area\left(\X_3(\D^2)\setminus\X_3(U)\right)<\min\left\{\frac{1}{10},\varepsilon'\right\}\thinspace.$$
Equality \eqref{eq:cases-inf-dim-thm} yields
\begin{equation*}
|\G_3(\varphi_i)(g_{U,j})|\leq \varepsilon' \quad\text{ if }\quad i\neq j \quad\text{ and }\quad 1\leq\G_3(\varphi_i)(g_{U,i})\leq 12,
\end{equation*}
hence the matrix $M_{N\times N}$ is non-singular and the proof follows.
\end{proof}

\section{Comparison of bi-invariant metrics on $\B G$ and other comments}
\label{sec-comparision}

\subsection{The Hofer metric}\label{SS:hofer}
The most famous metric on the group of
Hamiltonian diffeomorphisms of a symplectic manifold $\Mo$
is the Hofer metric, see \cite{Ho,LM}. The associated norm is defined by
$$
\|f\|_{\OP{Hofer}}:=\inf_{F_t}\int_0^1\OP{osc}(F_t)dt,
$$
where $F_t$ is a compactly supported Hamiltonian function generating
the Hamiltonian flow $f_t$ from the identity to $f=f_1$.
The oscillation norm is defined by
$\OP{osc}(F)=\max_M F - \min_M F$.

\begin{ex}
Let $f\in \B G$ be a diffeomorphism
generated by a time independent and non-negative Hamiltonian
function $F$. It implies that all powers of $f$ are
also autonomous and hence $\|f^n\|_{\OP{Aut}} =~1$ for all $n\in \B Z$.
On the other hand, the Calabi homomorphism is positive
on $f$ and hence $\|f^n\|_{\OP{Hofer}}\geq \OP{const}|n| \C(f)$,
for some positive constant.

Also, $\|f^{1/n}\|_{\OP{Aut}}=1$ but
$\lim\limits_{n\to \infty} \|f^{1/n}\|_{\OP{Hofer}}=0$.
Here $f^{1/n}$ is the unique diffeomorphism in
the flow generated by $F$ such that its $n$-th power
is equal to $f$.
This shows that the identity homomorphism between
the autonomous metric and the Hofer metric is
not Lipschitz in neither direction.
\end{ex}

\subsection{The restricted autonomous metric}
\label{SS:restricted}

Let $S_r\subset \Ham\Mo$ be the set of autonomous diffeomorphisms generated by Hamiltonian functions
with the $L^{\infty}$-norm bounded by $r>0$. This set is invariant under conjugations and hence
the corresponding word metric is bi-invariant. We call it the restricted autonomous metric and
denote the corresponding norm by $\|f\|_r$. For all $r$ these metrics are Lipschitz equivalent.
Indeed, it is easy to check that if $r\leq R$ then
$$
\|f\|_R\leq \|f\|_r \leq \lceil R/r\rceil \|f\|_R
$$
for all $f\in \Ham\Mo$. Moreover, we have
that $\|f\|_{\OP{Aut}}\leq \|f\|_r$ for every~$r$.
This trivially implies that the main results
of the paper hold for the restricted autonomous
metric.

Let $f\in \B G$ be such that
$f=h_1\circ \dots \circ h_k$ with each $h_i$ is autonomous generated by a Hamiltonian
$H_i$ of the oscillation norm smaller than $r$.
\begin{eqnarray*}
\C(f)&=& \int_0^1dt \int_{\B D^2} F_t\omega\\
&=& \int_0^1 dt \int_{\B D^2} \sum_{i=1}^k
H_i ((h_{1,t}\circ \dots \circ h_{i-1,t})^{-1})\omega\\
&\leq&  \sum_{i=1}^k \OP{osc}H_i \leq k r
\end{eqnarray*}
We thus obtain the following estimate
$$
\C(f)/r \leq \|f\|_r
$$
which proves that the restricted autonomous norm is not equivalent to the autonomous norm for there
are autonomous diffeomorphisms with arbitrarily big Calabi invariant.

\subsection{Fragmentation metrics}
\label{SS:fragment}
Let $U\subset M$ be a set with nonempty interior. The fragmentation metric
${\bf d}_U$ is a word metric defined with respect to the generating set
consisting of diffeomorphisms conjugated to ones supported in $U$. Such a set
is invariant under conjugations by construction and hence the fragmentation
metric is bi-invariant.

It follows from the proof of Theorem \ref{T:main} that there
is a diffeomorphism $f$ supported in the set $U$ such that
$f$ has arbitrarily big autonomous norm. Clearly, the
fragmentation norm of $f$ is equal to one.

\begin{ex}\label{E:frag}
Suppose that $U\subset \B D^2$ is a disc of radius $1/2$.
According to Biran, Entov and Polterovich \cite{BEP}, the
space of homogeneous Calabi quasi-morphisms on
$\Ham(\B D^2)$ is infinite dimensional. The Calabi property
means that the restriction of a quasi-morphism
to the subgroup of diffeomorphisms supported on a
displaceable subset is equal to the Calabi homomorphism.

Consider the subgroup $\B K = \ker \C \subset \B G$.
It is generated
up to conjugation by diffeomorphisms supported in $U$
and hence the fragmentation metric is defined on $\B K$.
(In the next section we explain that this metric is
equal to the autonomous metric induced from $\B G$).
Let $q\colon \B K \to \B R$ be a Calabi quasi-morphism.
Since it is trivial on the generators it is Lipschitz
with respect to the fragmentation norm.

It follows from the proof of Theorem 2.3 in \cite{BEP}
that there is an autonomous diffeomorphism $f\in \B K$
and a homogeneous Calabi
quasi-morphism $q\colon \B K\to \B R$ such that
$q(f)>0$. This implies that $f^n$ can have arbitrarily
big fragmentation norm. Its autonomous norm is
equal to one.
\end{ex}

\subsection{The kernel of the Calabi homomorphism}
\label{SS:ker_calabi}
This is a  remark on the geometry of the inclusion
$\ker \C = \B K \to \B G$ with respect to the autonomous
metric. Observe that the kernel of the Calabi homomorphism
is generated by autonomous diffeomorphisms and
let $\|g\|_{\OP{Aut^{\prime}}}$ denotes the corresponding
autonomous norm of $g\in \B K$.

\begin{lem}\label{L:ker_calabi}
Let ${\bf i}\colon \B K\to \B G$ be the inclusion.
Then
$$
\|g\|_{\OP{Aut^{\prime}}} = \|{\bf i}(g)\|_{\OP{Aut}}
$$
for every $g\in \B K$.
\end{lem}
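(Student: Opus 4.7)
The plan is to establish the two inequalities separately. The direction $\|\mathbf{i}(g)\|_{\OP{Aut}} \leq \|g\|_{\OP{Aut^{\prime}}}$ is immediate: every autonomous diffeomorphism in $\B K$ is in particular an autonomous diffeomorphism in $\B G$, so any $\OP{Aut^{\prime}}$-factorization of $g$ inside $\B K$ also counts as an $\OP{Aut}$-factorization of $\mathbf{i}(g)$ inside $\B G$. The substantive content is the reverse inequality, and for this I would use a telescoping construction.

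Starting from a factorization $g = h_1 \cdots h_m$ in $\B G$ with each $h_i$ autonomous (possibly having nonzero Calabi invariant), I would rewrite it as
\[
g = (h_1 \tilde r_1)(\tilde r_1^{-1} h_2 \tilde r_2)\cdots(\tilde r_{m-2}^{-1} h_{m-1} \tilde r_{m-1})(\tilde r_{m-1}^{-1} h_m),
\]
where the correction diffeomorphisms $\tilde r_i \in \B G$ are autonomous, have pairwise disjoint supports, and are supported away from every $\OP{supp}(h_j)$. Setting $s_i := \sum_{j \leq i} \C(h_j)$, so that $s_0 = 0$ and $s_m = \C(g) = 0$ because $g \in \B K$, I would prescribe $\C(\tilde r_i) = -s_i$. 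Disjointness of supports guarantees that each bracketed factor is a composition of commuting autonomous pieces, hence itself autonomous with generating Hamiltonian equal to the sum of the individual Hamiltonians, and the Calabi invariant of each factor telescopes: $\C(h_1 \tilde r_1) = \C(h_1) - s_1 = 0$, $\C(\tilde r_{i-1}^{-1} h_i \tilde r_i) = s_{i-1} + \C(h_i) - s_i = 0$, and $\C(\tilde r_{m-1}^{-1} h_m) = s_{m-1} + \C(h_m) = s_m = 0$.

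To realize the $\tilde r_i$ concretely, I would use that $\bigcup_j \OP{supp}(h_j)$ is contained in some compact subdisc $\overline{\B D}_{1-\epsilon}$, so the annulus $\B D^2 \setminus \overline{\B D}_{1-\epsilon}$ accommodates $m-1$ pairwise disjoint small open discs $B_1,\ldots,B_{m-1}$; on each $B_i$ any prescribed Calabi value can be attained by a suitable scalar multiple of a fixed bump-function Hamiltonian. The main obstacle is really only this: checking that there is enough free room to place all corrections disjointly from everything. This is automatic for compactly supported diffeomorphisms of the open disc, but it is also the step that makes the argument particular to $\B D^2$; on a closed symplectic manifold one would have to adjust Calabi-type invariants by some less direct means.
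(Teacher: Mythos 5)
Your proof is correct and is essentially the paper's own argument: the same telescoping decomposition $g=(h_1f_1^{-1})(f_1h_2f_2^{-1})\cdots(f_{m-1}h_m)$ with disjointly supported autonomous corrections whose Calabi invariants are prescribed so that every factor is autonomous with vanishing Calabi invariant (your $\tilde r_i$ is just $f_i^{-1}$ in the paper's notation). Your explicit realization of the corrections by bump functions in an annulus surrounding the supports simply spells out what the paper asserts as "straightforward to construct."
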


\begin{proof}
By definition for each $g\in\B K$ we have
$\|\B i(g)\|_{\OP{Aut}}\leq\|g\|_{\OP{Aut}^{\prime}}$.
Let $g\neq\Id$ and suppose that $\|\B i(g)\|_{\OP{Aut}}=m$. It means that the diffeomorphism
$g=h_1\circ\ldots\circ h_m$ for some autonomous diffeomorphisms $h_i$.
It is straightforward to construct autonomous
diffeomorphisms $f_1,\ldots,f_{m-1}$ such that
\begin{itemize}
\item
the diffeomorphisms $f_i$ and $f_j$ have disjoint supports for $i\neq~j$,
and the diffeomorphisms $f_i$ and $h_j$ have disjoint supports for all
$1\leq i\leq m-1$ and $1\leq j\leq m$,
\item
$\C(f_1)=\C(h_1)$ and
$\C(f_i)=\C(f_{i-1}\circ h_i)$
for $2\leq i\leq m-1$.
\end{itemize}
For example, we can take autonomous diffeomorphisms $f_i$ disjointly supported away from the union of the supports of $h_j$'s and with
appropriate values of the Calabi homomorphism.  We can write $g$ as follows
$$
g=(h_1\circ f_1^{-1})\circ(f_1\circ h_2\circ f_2^{-1}) \circ\ldots\circ
(f_{m-2}\circ h_{m-2}\circ f_{m-1}^{-1})\circ(f_{m-1}\circ h_m).
$$
Note that
$$\C(h_1\circ f_1^{-1})= \C(f_{i-1}\circ h_i\circ f_i^{-1})=0$$
for $2\leq i\leq m-1$, and $\C(f_{m-1}\circ h_m)=0$ because $\C(g)=0$.
Since each $h_i$ commutes with each $f_j$ and each $f_i$ commutes with each
$f_j$, the diffeomorphisms $h_1\circ f_1^{-1}$,
$f_{i-1}\circ h_i\circ f_i^{-1}$ for $2\leq i\leq m-1$, and
$f_{m-1}\circ h_m$ are autonomous diffeomorphisms
which finishes the proof.
\end{proof}

\subsection*{Acknowledgments}
Both authors would like to thank the anonymous referee for careful reading of our paper and for his/her helpful comments and remarks. This work has been done during the first author stay in Aberdeen and in Mathematisches Forschungsinstitut Oberwolfach.  The first author wishes to
express his gratitude to both institutes. He was supported by ESF grant number $4824$ and by the
Oberwolfach Leibniz fellowship. The visit in Aberdeen was supported by the CAST network.

\vspace{3mm}
Department of Mathematics, Vanderbilt University, Nashville, TN\\
\emph{E-mail address:} \verb"michael.brandenbursky@vanderbilt.edu"
\vspace{3mm}

University of Aberdeen and University of Szczecin\\
\emph{E-mail address:} \verb"kedra@abdn.ac.uk"

\end{document}